\newcommand*{\rom}[1]{\expandafter\@slowromancap\romannumeral #1@}
\newcommand{\G}{\mathbb{G}}
\newcommand{\Gs}{\mathbb{G}_{\text{syl}}}
\newcommand{\A}{\mathbb{A}}
\newcommand{\AU}{\A U}
\newcommand{\As}{\A_{\text{syl}}}
\newcommand{\AG}{\A \G}
\newcommand{\KG}{K \G}
\newcommand{\AGs}{\A \G_{\text{syl}}}
\newcommand{\Af}{\A_{\text{fus}}}
\newcommand{\F}{\mathbb{F}}
\newcommand{\AF}{\A \F_p}
\newcommand{\Po}{\mathbb{P}}
\newcommand{\AP}{\A \Po}
\newcommand{\Z}{\mathbb{Z}}
\newcommand{\al}{\alpha}
\newcommand{\lra}[1]{\overset{#1}{\longrightarrow}}
\newcommand{\Prod}[1]{\underset{#1}{\prod}}
\newcommand{\cE}{\mathcal{E}}
\newcommand{\cF}{\mathcal{F}}
\newcommand{\cG}{\mathcal{G}}
\newcommand{\Sinf}{\Sigma^{\infty}}
\newcommand{\Sinfp}{\Sigma^{\infty}_{+}}
\newcommand{\Sinfpp}{\hat{\Sigma}^{\infty}_{+}}
\newcommand{\Sinfh}{\hat{\Sigma}^{\infty}}
\date{\today}
\tikzset{arrow/.style={semithick,>=stealth',shorten >=1pt,shorten <=1pt}}
\let\calc@author@part\calc@author@part@
\let\@suffix@format\@arabic
\let\append@label@year\@empty
\def\eprint#1{\@eprint#1 }
\def\@eprint #1:#2 {%
    \ifthenelse{\equal{#1}{arXiv}}%
        {\href{http://front.math.ucdavis.edu/#2}{arXiv:#2}}%
        {\href{#1:#2}{#1:#2}}%
}
\theoremstyle{definition}
\DeclareMathOperator{\colim}{colim}
\DeclareMathOperator{\Id}{Id}
\DeclareMathOperator{\Tr}{Tr}
\DeclareMathOperator{\Sp}{Sp}
\DeclareMathOperator{\Top}{Top}
\DeclareMathOperator{\Ho}{Ho}
\newtheorem{theorem}{Theorem}[section]
\newtheorem{corollary}[theorem]{Corollary}
\newtheorem{proposition}[theorem]{Proposition}
\newtheorem{lemma}[theorem]{Lemma}
\newtheorem{remark}[theorem]{Remark}
\newtheorem{definition}[theorem]{Definition}
\newtheorem{example}[theorem]{Example}
\let\SK@label\label\fi
 \let\your@thm\@thm
 \def\@thm#1#2#3{\gdef\currthmtype{#3}\your@thm{#1}{#2}{#3}}
 \def\mylabel#1{{\let\your@currentlabel\@currentlabel\def\@currentlabel
  {\currthmtype~\your@currentlabel}
 \SK@label{#1@}}\label{#1}}
\DeclareMathOperator{\id}{id}
\newcommand{\upperRomannumeral}[1]{\uppercase\expandafter{\romannumeral#1}}
\renewcommand{\phi}{\varphi}
\DeclarePairedDelimiter{\abs}{\lvert}{\rvert}
\begin{document}
\title{A formula for $p$-completion by way of the Segal conjecture}
\author[S. P. Reeh]{Sune Precht Reeh}
\author[T. M. Schlank]{Tomer M. Schlank}
\author[N. Stapleton]{Nathaniel Stapleton}

\maketitle

\begin{abstract}
The Segal conjecture describes stable maps between classifying spaces in terms of (virtual) bisets for the finite groups in question. Along these lines, we give an algebraic formula for the p-completion functor applied to stable maps between classifying spaces purely in terms of fusion data and Burnside modules.
\end{abstract}


\section{Introduction}

The $p$-completion of the classifying spectrum of a finite group is determined by the data of the induced fusion system on a Sylow $p$-subgroup. That is, if $G$ is a finite group, $S \subset G$ is a Sylow $p$-subgroup and $\cF_G$ is the fusion system on $S$ determined by $G$, then there is an equivalence of spectra
\[
(\Sinf BG)^{\wedge}_p \simeq \Sinf B\cF_G,
\]
where $B\cF_G$ is the classifying space associated to the fusion system (see Section \ref{secFusionSystems}).

The solution to the Segal conjecture provides an algebraic description of the homotopy classes of maps between suspension spectra of finite groups in terms of Burnside modules. In \cite{Ragnarsson}, a Burnside module between saturated fusions systems is defined. It is a submodule of the $p$-complete Burnside module between the Sylow $p$-subgroups that is characterized in terms of the fusion data. It is shown that this submodule captures the stable homotopy classes of maps between the $p$-completions of suspension spectra of finite groups. The $p$-completion functor induces a natural map of abelian groups
\[
[\Sinfp BG, \Sinfp BH] \rightarrow [(\Sinfp BG)^{\wedge}_{p}, (\Sinfp BH)^{\wedge}_{p}].
\]
In this paper, we give an algebraic description of this map in terms of fusion data.

Let $G$ and $H$ be finite groups. The proof of the Segal conjecture establishes a canonical natural isomorphism
\[
A(G,H)^{\wedge}_{I_G} \cong [\Sinfp BG, \Sinfp BH]
\]
between the Burnside module $A(G,H)$ of finite $(G,H)$-bisets with free $H$-action completed at the augmentation ideal $I_G$ of the Burnside ring $A(G)$ and the stable homotopy classes of maps between $BG$ and $BH$. Fix a prime $p$ and Sylow $p$-subgroups $S$ and $T$ of $G$ and $H$ respectively. Let $\cF_G$ and $\cF_H$ be the fusion systems on the fixed Sylow $p$-subgroups determined by $G$ and $H$. It follows from \cite{BLO2} that there are canonical (independent of the choice of Sylow $p$-subgroup) equivalences of spectra
\[
(\Sinf BG)^{\wedge}_p \simeq \Sinf B\cF_G \text{ and } (\Sinfp BG)^{\wedge}_p \simeq \Sinf B\cF_G \vee (S^0)^{\wedge}_{p} \simeq (\Sinfp B\cF_G)^{\wedge}_{p}.
\]
The Burnside module for the fusion systems $\cF_G$ and $\cF_H$, as defined in \cite{Ragnarsson}, is the submodule
\[
\AF(\cF_G, \cF_H) \subset A(S,T)^{\wedge}_{p}
\]
consisting of ``fusion stable" $(S,T)$-bisets. Stability is defined entirely in terms of the fusion data. The restriction of a $(G,H)$-biset ${}_{G}X_{H}$ to the Sylow $p$-subgroups $S$ and $T$ induces a map of Burnside modules
\[
A(G,H) \rightarrow A(S,T) \hookrightarrow A(S,T)^{\wedge}_{p}
\]
sending ${}_{G}X_{H}$ to ${}_{S}X_{T}$. This map lands inside the stable elements:
\[
\xymatrix{A(G,H) \ar[r] \ar@{-->}[dr] & A(S,T)^{\wedge}_{p} \\ & \AF(\cF_G,\cF_H). \ar@{^{(}->}[u]}
\]
We will write ${}_{\cF_G}X_{\cF_H}$ for ${}_{S}X_{T}$ viewed as an element of $\AF(\cF_G,\cF_H)$. Corollary 9.4 of \cite{RagnarssonStancu} essentially produces an isomorphism
\[
\AF(\cF_G, \cF_H) \cong [(\Sinfp BG)^{\wedge}_{p}, (\Sinfp BH)^{\wedge}_{p}].
\]
Using the $p$-completion functor
\[
(-)^{\wedge}_{p} \colon [\Sinfp BG, \Sinfp BH] \rightarrow [(\Sinfp BG)^{\wedge}_{p}, (\Sinfp BH)^{\wedge}_{p}]
\]
we can form the composite
\[
\widehat{(-)} \colon A(G,H) \rightarrow A(G,H)^{\wedge}_{I_G} \cong [\Sinfp BG, \Sinfp BH] \lra{(-)^{\wedge}_{p}} [(\Sinfp BG)^{\wedge}_{p}, (\Sinfp BH)^{\wedge}_{p}] \cong \AF(\cF_G, \cF_H).
\]
It is natural to ask for a completely algebraic description of this map in terms of bisets. This is not just the restriction map, an extra ingredient is needed. Let $_{T}H_{T}$ be the underlying set of $H$ acted on the left and right by $T$. Since this is the restriction of $_{H}H_{H}$, it is stable so we may consider it as an element $_{\cF_H}H_{\cF_H} \in \AF(\cF_H,\cF_H)$.  It is invertible as $|H/T|$ is prime to $p$.

\begin{theorem} \label{mainone} (Theorem \ref{mainthm})
The ``completion" map
\[
\widehat{(-)} \colon A(G,H) \lra{} \AF(\cF_G,\cF_H)
\]
is given by
\[
\widehat{_{G} X_{H}} = (_{\cF_H}H_{\cF_H})^{-1} \circ (_{\cF_G}X_{\cF_H}) = {}_{\cF_G}X \times_T H^{-1}_{\cF_H}.
\]
\end{theorem}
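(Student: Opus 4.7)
The plan is to unwind $c(_{G}X_{H})$ by tracing it through each leg of its defining composite, using the classical realization of $(\Sinfp BG)^{\wedge}_{p}$ as a wedge retract of $(\Sinfp BS)^{\wedge}_{p}$ via the inclusion and Becker--Gottlieb transfer for $BS \to BG$. Write $\rho_{G}\colon \Sinfp BS \to \Sinfp BG$ for the inclusion, whose Segal class is ${}_{S}G_{G}$, and $\sigma_{G}\colon \Sinfp BG \to \Sinfp BS$ for $\tfrac{1}{[G:S]}$ times the Becker--Gottlieb transfer, with class $\tfrac{1}{[G:S]}\cdot{}_{G}G_{S}$. Since $[G:S]$ is prime to $p$, after $p$-completion $\rho_{G}\circ\sigma_{G} = \id$, and $\sigma_{G}\circ\rho_{G}$ equals the characteristic idempotent $\omega_{G} = \tfrac{1}{[G:S]}\cdot{}_{S}G_{S} \in A(S,S)^{\wedge}_{p}$ of $\cF_{G}$; thus $(\Sinfp BG)^{\wedge}_{p}$ is the summand cut out by $\omega_{G}$. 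The analogous constructions for $T\le H$ give $\rho_{H}$, $\sigma_{H}$, and $\omega_{H}$.

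Under this presentation, the Ragnarsson--Stancu isomorphism $[(\Sinfp BG)^{\wedge}_{p},(\Sinfp BH)^{\wedge}_{p}] \cong \AF(\cF_{G},\cF_{H}) \subset A(S,T)^{\wedge}_{p}$ sends a map $g$ to $\sigma_{H}\circ g\circ\rho_{G}$, with values in $\omega_{H}\cdot A(S,T)^{\wedge}_{p}\cdot\omega_{G}$. Setting $g = f^{\wedge}_{p}$ where $f\colon \Sinfp BG \to \Sinfp BH$ corresponds to $_{G}X_{H}$, and applying the three-fold bar-tensor identity
\[
({}_{S}G_{G})\times_{G}({}_{G}X_{H})\times_{H}({}_{H}H_{T}) \;=\; {}_{S}X_{T}
\]
(functional composition of stable maps reverses the order when translated to bar-tensors), we obtain
\[
c(_{G}X_{H}) \;=\; \sigma_{H}\circ f^{\wedge}_{p}\circ\rho_{G} \;=\; \tfrac{1}{[H:T]}\cdot{}_{S}X_{T}.
\]

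It remains to match this with $({}_{\cF_{H}}H_{\cF_{H}})^{-1}\circ({}_{\cF_{G}}X_{\cF_{H}})$. By construction $\omega_{H} = \tfrac{1}{[H:T]}\cdot{}_{T}H_{T}$ is the identity of the ring $\AF(\cF_{H},\cF_{H})$, so ${}_{T}H_{T} = [H:T]\cdot\omega_{H}$ is invertible there, with inverse $({}_{\cF_{H}}H_{\cF_{H}})^{-1} = \tfrac{1}{[H:T]}\cdot\omega_{H}$ (using $[H:T]\in\Z_{p}^{\times}$). Since $_{S}X_{T}$ is $\cF_{H}$-stable on the right, $\omega_{H}\circ{}_{S}X_{T} = {}_{S}X_{T}$, and so
\[
({}_{\cF_{H}}H_{\cF_{H}})^{-1}\circ({}_{\cF_{G}}X_{\cF_{H}}) \;=\; \tfrac{1}{[H:T]}\cdot\omega_{H}\circ{}_{S}X_{T} \;=\; \tfrac{1}{[H:T]}\cdot{}_{S}X_{T},
\]
matching the earlier computation and completing the identification.

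The principal technical obstacle is verifying these structural compatibilities, namely that the Ragnarsson--Stancu isomorphism is realized precisely by the $(\sigma_{\bullet},\rho_{\bullet})$-sandwich and that the $p$-completion functor on $[\Sinfp BG,\Sinfp BH]$ is correctly recorded by $f \mapsto \sigma_{H}\circ f^{\wedge}_{p}\circ\rho_{G}$ under the Segal-conjecture and summand identifications; once these are in hand, the rest follows from the single bar-tensor identity above together with the invertibility of $_{T}H_{T}$ in $\AF(\cF_{H},\cF_{H})$.
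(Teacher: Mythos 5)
There is a genuine error at the heart of your argument: the identification $\omega_{\cF_G} = \tfrac{1}{[G:S]}\cdot {}_{S}G_{S}$ (and hence $({}_{\cF_H}H_{\cF_H})^{-1} = \tfrac{1}{[H:T]}\cdot\omega_{\cF_H}$) is false unless the Sylow subgroup is normal. The element $\tfrac{1}{[G:S]}\,{}_{S}G_{S}$ is not even idempotent in $\AG(S,T)^{\wedge}_{p}$ in general. For instance take $H=S_3$, $p=2$, $T=C_2$: then ${}_{T}H_{T} = [T,\id] + (T\times T)$, and $({}_{T}H_{T})^{2} = [T,\id] + 4(T\times T) \neq 3\cdot{}_{T}H_{T}$, so $\tfrac13\,{}_{T}H_{T}$ is not idempotent; correspondingly the image of ${}_{\cF_H}H_{\cF_H}$ in $A(\cF_H)^{\wedge}_{2}$ is the stable set $H/T = 1 + [T/e]$, which is not $3\cdot 1$, so $({}_{\cF_H}H_{\cF_H})^{-1}$ is not the scalar $\tfrac13$ times the identity. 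Your claim that $\rho_G\circ\sigma_G=\id$ after $p$-completion fails for the same reason: that composite induces the identity on mod-$p$ cohomology, but by the Segal conjecture the endomorphisms of $\Sinfpp BG$ form the much larger ring $\AF(\cF_G,\cF_G)$, and cohomology does not detect equality there. A quick sanity check exposes the problem: your final formula gives $c([G,\id]_G^G) = \tfrac{1}{[G:S]}\,{}_{\cF_G}G_{\cF_G}$, whereas functoriality of $p$-completion forces $c([G,\id]_G^G)$ to be the identity $\omega_{\cF_G}$, and these differ whenever $S$ is not normal in $G$.

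The paper's proof is built precisely to avoid this shortcut. It first shows (Lemma \ref{invertiblemap}) that ${}_{\cF_G}G_{\cF_G}$ is a \emph{unit} in $\AF(\cF_G,\cF_G)$ — using that $A(\cF_G)^{\wedge}_{p}$ is complete local with residue field $\F_p$ and that $|G/S|$ is prime to $p$ — and its inverse is a genuinely nontrivial element given by the limit formula of Proposition \ref{inverseFormula}, not a scalar multiple of the characteristic idempotent. The equivalence $a_G\colon \Sinfpp B\cF_G\to\Sinfpp BG$ is then inverted explicitly by $b_G$, which is the transfer and projection \emph{corrected} by composition with $({}_{\cF_G}G_{\cF_G})^{-1}$; this correction is exactly what your $(\sigma_\bullet,\rho_\bullet)$-sandwich omits, and it is the reason the factor $({}_{\cF_H}H_{\cF_H})^{-1}$ appears in the statement at all rather than $\tfrac{1}{[H:T]}$. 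The bar-tensor identity ${}_{S}G\times_G X\times_H H_T = {}_{S}X_{T}$ that you use is fine and is also used in the paper (Proposition \ref{bigdiagram}), but the surrounding retraction data must be the one coming from Ragnarsson's characteristic idempotent, with the non-scalar unit ${}_{\cF_H}H_{\cF_H}$ inverted in the complete local ring, to make the argument correct.
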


Thus we have a commutative diagram
\[
\xymatrix{[\Sinfp BG, \Sinfp BH] \ar[r]^-{(-)^{\wedge}_{p}} & [(\Sinfp BG)^{\wedge}_{p}, (\Sinfp BH)^{\wedge}_{p}] \ar[d]^{\cong} \\ A(G,H) \ar[u] \ar[r]^-{\widehat{(-)}} & \AF(\cF_G, \cF_H),}
\]
where $\widehat{(-)}$ is given by the formula in the theorem above.

Along the way to proving Theorem \ref{mainone}, we review the theory of Burnside modules, spectra, fusion systems, Burnside modules for fusion systems, and $p$-completion as well as proving a few folklore results. We prove that the suspension spectrum of the $p$-completion of the classifying space of a finite group is the same as the $p$-completion of the classifying spectrum
\[
\Sinf (BG^{\wedge}_{p}) \simeq (\Sinf BG)^{\wedge}_{p}.
\]
We also show that the $p$-completion map induces an isomorphism
\[
[\Sinfp BG, \Sinfp BH]^{\wedge}_{p} \lra{\cong} [(\Sinfp BG)^{\wedge}_{p}, (\Sinfp BH)^{\wedge}_{p}]
\]
and give explicit formulas for $(_{\cF_H}H_{\cF_H})^{-1}$ that aid computation.

Let $\AG$ be the Burnside category with finite groups as objects and Burnside modules $A(-,-)$ as morphism sets. Then $\widehat{(-)}$ does not directly define a functor $\AG \to \AF$ on objects because the fusion system $\cF_G$ depends on the choice of a Sylow $p$-subgroup in $G$, even if different choices give isomorphic fusion systems.

Let $\Gs$ be the category of finite groups with a chosen Sylow $p$-subgroup, and let $\AGs$ be the associated Burnside category. Then $\widehat{(-)}$ gives a well-defined functor $\AGs \to \AF$ fitting into a commutative diagram (with notation introduced in Appendix \ref{sectionCategories}):
\begin{equation}\label{eqDiagramOfCats}
\xymatrix{\G \ar[r]^{\A}  &\AG \ar[r]^-{\al}& \Ho(\Sp) \ar[dd]^-{(-)^{\wedge}_{p}} \\ \Gs \ar[u]^{U} \ar[r]^{\As} \ar[d]_{F} & \AGs \ar[u]^-{\AU}_-{\simeq}  \ar[d]_-{\widehat{(-)}} &  \\ \F \ar[r]_{\Af} &\AF \ar[r]^-{\beta} & \Ho(\Sp_p).}
\end{equation}
In Appendix \ref{sectionCategories}, we work out this diagram and recall the categories and functors involved.

\subsection*{Outline}
Section \ref{secPreliminaries} recalls the preliminaries for the paper and touches on the following topics in order: Bisets and Burnside modules, stable maps and the Segal conjecture for finite groups, the effect of disjoint base points for the Segal conjecture, fusion systems and their Burnside modules, and finally p-completion of classifying spectra of finite groups.

In Section \ref{secAFormulaForPCompletion} we explore the equivalence between $p$-completed classifying spectra of finite groups and classifying spectra of fusion systems from the view point of bisets, and we provide a proof of Theorem \ref{mainone} as Theorem \ref{mainthm}.

In Appendix \ref{sectionCategories} we provide a summary of how the functor $\widehat{(-)}$ of Theorem \ref{mainone} fits into the commutative diagram \eqref{eqDiagramOfCats}.

\subsection*{Acknowledgments}
All three authors would like to thank the Max Planck Institute for Mathematics and the Hausdorff Research Institute for Mathematics for their hospitality and support. While working on this paper the first author was funded by the Independent Research Fund Denmark (DFF–4002-00224) and later on by BGSMath and the María de Maeztu Programme (MDM–2014-0445). The second and third author were jointly supported by the US-Israel Binational Science Foundation under grant 2018389. The second author was partially supported by the Alon Fellowship and ISF 1588/18. The third author was partially supported by NSF grant DMS-1906236 and the SFB 1085 \emph{Higher Invariants} at the University of Regensburg. The first author would like to thank his working group at HIM for being patient listeners to discussions about bisets, $I$-adic topologies, and $p$-completions.

\section{Preliminaries}\label{secPreliminaries}
The purpose of this section is to recall definitions and results that are relevant to this paper. We review Burnside modules, spectra, fusion systems, and notions of $p$-completion. On top of this, we prove a few folklore results regarding $p$-completion.

\subsection{Burnside modules} \label{sec:burnside}

Let $G$ and $H$ be finite groups. Recall that a finite $(G,H)$-biset $X$ is a finite set equipped with a left action of $G$ and a right action of $H$ with the property that these actions commute.

\begin{definition}
Let $\AG$ be the Burnside category of finite groups. The objects are finite groups. The morphism set between two groups $G$ and $H$, $\AG(G,H)$, is the Grothendieck group of isomorphism classes of finite $(G,H)$-bisets with free $H$-action and disjoint union as addition. We will refer to the elements in $\AG(G,H)$ as virtual bisets. Given a third group $K$, the composition map
\[
\AG(H,K) \times \AG(G,H) \rightarrow \AG(G,K)
\]
is induced by the map sending an $(H,K)$-biset $Y$ and a $(G,H)$-biset $X$ to the coequalizer $X \times_H Y$. The composition map is bilinear. The identity map in $\AG(G,G)$ is the $(G,G)$-biset $G$ with $G$ acting by left and right multiplication, respectively.
\end{definition}

There is a canonical basis of $\AG(G,H)$ as a $\Z$-module given by the isomorphism classes of transitive $(G,H)$-bisets. These bisets are of the form
\[
G \times_{K}^{\phi} H = (G \times H) \big / (gk,h) \sim (g,\phi(k)h),
\]
where $K \subseteq G$ is a subgroup of $G$ and $\phi \colon K \rightarrow H$ is a group homomorphism. If we precompose $\phi$ with a conjugation map in $G$ and postcompose $\phi$ with a conjugation map in $H$, the construction above gives rise to an isomorphic $(G,H)$-biset. We will denote the isomorphism classes of these $(G,H)$-bisets by $[K,\phi]_{G}^{H}$ or just $[K,\phi]$ when $G$ and $H$ are clear from context. It is also common to denote a virtual biset $X \in \AG(G,H)$ as ${}_{G}X_{H}$ when $G$ and $H$ are not clear from context.

There is also an ``unpointed" version of the category $\AG$, where we remove the part of $\AG(G,H)$ that is seen by the projection $H\to e$ to the trivial group:

\begin{definition}
Let $\KG$ be the category with objects finite groups and morphism sets given by
\[
\KG(G,H) = \ker(\AG(G,H) \lra{\epsilon} \AG(G,e)),
\]
where $\epsilon({}_{G}X_{H}) = {}_G(X/H)_{e}$.

The identity morphism in $\KG(G,G)$ is the virtual biset 
\[[G,i_G]-[G,0] = (G\times_G G) - (G/G\times_e G)\] where $0\colon G\to G$ is the trivial map sending every element to the neutral element. The virtual biset $[G,i_G]-[G,0]$ is idempotent in $\AG(G,G)$, and 
\[\KG(G,H)=([G,i_G]-[G,0])\ \AG(G,H)\ ([H,i_H]-[H,0]) =\AG(G,H)\ ([H,i_H]-[H,0]).\]
\end{definition}

Let $A(G)$ be the Burnside ring of $G$: Additively $A(G) = \AG(G,e)$, but the multiplicative structure comes from the cartesian product of left $G$-sets (with diagonal action). This ring may be identified with the (commutative) subring $A^{\text{char}}(G) \subset \AG(G,G)$ spanned by the $(G,G)$-bisets of the form $G \times_K G = [K,i_K]$ (known as the semicharacteristic $(G,G)$-bisets), where $K \subseteq G$ is a subgroup and $i_K \colon K \subset G$ is the inclusion. The identification of $A^{\text{char}}(G)$ with $A(G)$ is given by the composite
\[
A^{\text{char}}(G) \subset \AG(G,G) \lra{\epsilon} \AG(G,e).
\]
The inverse isomorphism sends $G/H \in A(G)$ to $G/H \times G = G \times_H G \in A^{\text{char}}(G)$, where the left action of $G$ on $G/H \times G$ is diagonal. Since $\AG(G,H)$ is a left $\AG(G,G)$-module, it is also a left $A(G)$-module. Finally, let $I_G \subset A(G)$ be the kernel of the augmentation $A(G) \rightarrow \Z$ sending a $G$-set $X$ to its cardinality, and let $p+I_G$ denote the sum of the ideals $I_G$ and $pA(G)$.

\begin{lemma}[\cite{MayMcClure}]\label{lemmaIpadic}
If $G$ is a $p$-group of order $p^n$, then $I_G^{n+1}\subseteq pI_G$. Furthermore, the $I_G$-adic and $p$-adic topologies on $\KG(G,H)$ coincide and the $(p+I_G)$-adic and $p$-adic topologies on $\AG(G,H)$ coincide.
\end{lemma}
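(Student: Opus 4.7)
The plan is to prove $I_G^{n+1}\subseteq pI_G$ via the mark map and then deduce both topology statements, the second of which will require additional work.

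For the main containment, I would use the mark map $\phi\colon A(G)\to \prod_{[K]}\Z$, $\phi_K(X)=|X^K|$, indexed by conjugacy classes of subgroups; this is a standard injective ring homomorphism. Two facts about $\phi$ for a $p$-group $G$ are central. First, the congruence $|X^K|\equiv |X^{K'}|\pmod p$ (arising from the $K'/K\cong \Z/p$-action on $X^K$ whenever $[K':K]=p$) propagates along a subnormal chain from $K$ up to $G$ to give $\phi_K(X)\equiv \phi_e(X)\pmod p$ for every $K$ and $X$; hence $\phi(I_G)\subseteq p\prod_{[K]}\Z$ and, by multiplicativity, $\phi(I_G^{n+1})\subseteq p^{n+1}\prod_{[K]}\Z$. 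Second, the Gluck--Yoshida formula $e_K=|N_G(K)|^{-1}\sum_{H\leq K}|H|\mu(H,K)\,G/H$ for the primitive idempotents $e_K\in A(G)\otimes \Q$ shows $|G|\cdot e_K\in A(G)$, so $p^n\prod_{[K]}\Z\subseteq \phi(A(G))$. Combining, $\phi(I_G^{n+1})\subseteq p\cdot p^n\prod\Z\subseteq \phi(pA(G))$, and injectivity of $\phi$ gives $I_G^{n+1}\subseteq pA(G)$; intersecting with $I_G$ (and using that $\epsilon$ is $\Z$-valued) yields $I_G^{n+1}\subseteq pI_G$.

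Granted the containment, the $(p+I_G)$-adic versus $p$-adic comparison on $\AG(G,H)$ is formal: binomial expansion gives $(p+I_G)^{n+1}\subseteq pA(G)$, so $(p+I_G)^{k(n+1)}\AG(G,H)\subseteq p^k\AG(G,H)$, and $p\in p+I_G$ gives the reverse. One direction for $\KG(G,H)$ is equally easy, namely $I_G^{k(n+1)}\KG(G,H)\subseteq p^kI_G^k\KG(G,H)\subseteq p^k\KG(G,H)$.

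The reverse containment $p^m\KG(G,H)\subseteq I_G^k\KG(G,H)$ is the main obstacle. My plan is to show that $\KG(G,H)/I_G\KG(G,H)$ is a finite $p$-primary abelian group, which gives some $N$ with $p^N\KG(G,H)\subseteq I_G\KG(G,H)$ and, iterating, $p^{kN}\KG(G,H)\subseteq I_G^k\KG(G,H)$. To see the $p$-primary claim I would invert $p$: by the cokernel bound just used, $A(G)[1/p]\cong \prod_{[K]}\Z[1/p]$, the primitive idempotents $e_K$ become integral, and $\AG(G,H)[1/p]$ decomposes into the isotypic summands $e_K\AG(G,H)[1/p]$. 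A direct computation using $(G/e)\cdot [K,\phi]=[G:K]\cdot F$ (where $F\in \AG(G,H)$ is the free biset $G\times H$) shows the $e_e$-summand is the rank-one $\Z[1/p]\cdot F$ on which $\epsilon$ is an isomorphism; in particular $e_e$ annihilates $\KG(G,H)$, so $\KG(G,H)[1/p]\subseteq \bigoplus_{K\neq e}e_K\AG(G,H)[1/p]$, on which $1-e_e\in I_G[1/p]$ acts as the identity. Therefore $I_G[1/p]\cdot \KG(G,H)[1/p]=\KG(G,H)[1/p]$, giving $(\KG(G,H)/I_G\KG(G,H))[1/p]=0$, and hence the finitely generated abelian group $\KG(G,H)/I_G\KG(G,H)$ is $p$-primary torsion.
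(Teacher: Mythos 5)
Your proof is correct. The derivation of $I_G^{n+1}\subseteq pI_G$ via the mark homomorphism and the Gluck--Yoshida idempotent formula is the standard argument and is essentially what May--McClure prove in the Lemma 5 that the paper cites; the paper itself offers no proof beyond that citation, so you are supplying an argument the authors took off the shelf. Both topology comparisons are also handled correctly, but the reverse inclusion for $\KG(G,H)$, which you rightly flag as the substantive step, can be obtained far more cheaply than by localizing at $p$, decomposing into primitive idempotent summands, and invoking finite generation. The very identity you record, $(G/e)\cdot X = m\cdot F$ with $F=G\times H$ the free biset and $m$ the cardinality of $X/H$ (equivalently $m=\alpha(\epsilon(X))$, where $\alpha\colon A(G)\to\Z$ is the augmentation), already shows that $u_0 := |G|\cdot 1 - [G/e]\in I_G$ acts on $\KG(G,H)=\ker\epsilon$ as multiplication by $|G|=p^n$, since $\alpha(\epsilon(X))=0$ there. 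Hence $p^n\,\KG(G,H)=u_0\cdot\KG(G,H)\subseteq I_G\,\KG(G,H)$, and iterating gives $p^{nk}\,\KG(G,H)\subseteq I_G^{k}\,\KG(G,H)$ directly. This replaces your final paragraph with two lines, stays integral throughout, and makes the ``Consequently'' in the statement genuinely immediate.
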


\begin{proof}
Lemma 5 in \cite{MayMcClure} states that the $I_G$-adic and $p$-adic topologies on $\KG(G,H)$ coincide, and $I_G^{n+1}\subseteq pI_G$ is the key part of the proof of said lemma. Finally, we have $(p+I_G)^{n+1} \subseteq p A(G)$ by the previous formula, and clearly $pA(G)\subseteq p+I_G$, so the $(p+I_G)$-adic and $p$-adic topologies coincide on any $A(G)$-module, hence in particular on $\AG(G,H)$.
\end{proof}

\begin{remark}
When $G$ is a $p$-group the ideal $p+I_G$ simply consists of the virtual $G$-sets $X\in A(G)$ with $p \mid \abs X$, i.e. the kernel of the mod-$p$ augmentation $A(G)\to \F_p$.
\end{remark}

\subsection{Spectra}

\begin{definition}
Let $\Ho(\Sp)$ be the homotopy category of spectra.
\end{definition}

For a pointed space $X$, let $\Sinf X$ be the suspension spectrum of $X$. Given another pointed space $Y$, we let
\[
[\Sinf X, \Sinf Y]
\]
be the abelian group of homotopy classes of stable maps. For $X$ unpointed, let $\Sinfp X$ be the suspension spectrum of $X$ with a disjoint basepoint. When $G$ is a finite group we will write $\Sinfp BG$ for the suspension spectrum of the classifying space $BG$ with a disjoint basepoint and $\Sinf BG$ for the suspension spectrum of $BG$ using $Be \rightarrow BG$ as the basepoint.

There is a canonical map
\[
\AG(G,H) \rightarrow [\Sinfp BG,\Sinfp BH]
\]
sending $[K,\phi]_{G}^{H}$ to the composite
\[
\Sinfp BG \lra{\text{Tr}} \Sinfp BK \xrightarrow{\Sinfp B\phi} \Sinfp BH,
\]
where $\text{Tr}$ is the transfer along the map $BK \to BG$, which is equivalent to a finite cover. This canonical map extends to a natural transformation of functors from $\AG^{\mathrm{op}} \times \AG$ to the category of abelian groups. This map was intensely studied over several decades culminating in the following theorem.

\begin{theorem}[\cite{CarlssonSegal}, \cite{AGM}, \cite{LewisMayMcClure}, ``The Segal conjecture"] \label{SegalConj}
There is a canonical isomorphism of commutative rings
\[
[\Sinfp BG,S^0] \cong A(G)^{\wedge}_{I_G}
\]
and a canonical isomorphism of $A(G)^{\wedge}_{I_G}$-modules
\[
[\Sinfp BG,\Sinfp BH] \cong \AG(G,H)^{\wedge}_{I_G}.
\]
The first isomorphism is a natural isomorphism of functors from the opposite category of finite groups to the category of commutative rings. The second isomorphism is a natural isomorphism of functors from $\AG^{\mathrm{op}} \times \AG$ to the category of abelian groups.
\end{theorem}

Several canonical isomorphisms follow from this theorem (see \cite{MayStableMaps}). For instance, \cite[Theorem 13]{MayStableMaps} produces a canonical isomorphism
\[
[\Sinf BG,\Sinf BH] \cong \KG(G,H)^{\wedge}_{I_G}.
\]
For $S$ a $p$-group, due to Lemma \ref{lemmaIpadic}, we have isomorphisms
\[
[ \Sinf BS, \Sinf BH] \cong \KG(S,H)^{\wedge}_{p}
\]
and
\[
[\Sinfp BS,\Sinfp BH]^{\wedge}_{p} \cong \AG(S,H)^{\wedge}_{p}.
\]
According to \cite{RagnarssonStancu}*{Proposition 9.2}, we can also relax the $p$-completion of stable maps in the last formula:
\[
[\Sinfp BS,\Sinfp BH] \cong \{X\in \AG(S,H)^{\wedge}_{p} \mid \abs{X}/\abs{S}\in \Z\},
\]
when $S$ is a $p$-group.

\subsection{Base points and idempotents} \label{remarkDisjointBasepoints}

The splitting 
\[
\Sinfp BG \simeq \Sinf BG \vee S^0 \simeq \Sinf BG \times S^0
\]
corresponds to a pair of complementary idempotents in $\AG(G,G)^\wedge_{I_G}$. In this section we will consider these idempotents and carefully work out how to go back and forth between $\Sinfp BG$ and $\Sinf BG$ while working with bisets. 

The projection $\Sinfp BG\to S^0$ and inclusion $S^0\to \Sinfp BG$ are induced by the group homomorphisms $0\colon G\to e$ and $i_e\colon e\to G$, respectively. Hence the idempotent of $\AG(G,G)^\wedge_{I_G}$ that splits off $S^0$ as a summand of $\Sinfp BG$, is the biset $[G,0]_G^e\times_e [e,i_e]_e^G = [G,0]_G^G$.

The complementary idempotent, $[G,i_G]-[G,0]\in \AG(G,G)^\wedge_{I_G}$, then splits of the summand $\Sinf BG$ from $\Sinfp BG$.

Multiplying with $[H,0]_H^H$ from the right, takes any $(G,H)$-biset $X$ to $(X/H)\times_e H \in \AG(G,H)^\wedge_{I_G}$. This is the map $\epsilon\colon \AG(G,H)^\wedge_{I_G} \to \AG(G,e)^\wedge_{I_G}$ followed by induction of bisets back up to $H$. Multiplying with  $[H,0]_H^H$ from the right corresponds to projecting onto $S^0$ and then including $S^0$ back into $\Sinfp BH$.

Multiplying $\AG(G,H)^\wedge_{I_G}$ with the complementary idempotent $([H,i_H]-[H,0])$ from the right gives the kernel of $\epsilon\colon \AG(G,H)^\wedge_{I_G}\to \AG(G,e)^\wedge_{I_G}$,
\[\AG(G,H)^\wedge_{I_G}([H,i_H]-[H,0]) = \KG(G,H)^\wedge_{I_G}.\]

A map $\Sinfp BG \rightarrow \Sinfp BH$ is determined by four maps between the summands. Algebraically, this corresponds to the splitting of $\AG(G,H)^{\wedge}_{I_G}$ by applying the idempotents $[G,0], ([G,i_G]-[G,0]) \in \AG(G,G)$ and $[H,0], ([H, i_H] - [H,0])  \in \AG(H,H)$ from the left and right respectively. Explicitly, we have the following isomorphisms:
\begin{align*}
[\Sinfp BG, \Sinfp BH] &\cong [\Sinf BG, \Sinf BH] \oplus [S^0,\Sinf BH] \oplus [\Sinf BG, S^0] \oplus [S^0,S^0],
\\ [\Sinf BG, \Sinf BH] &\cong ([G,i_G]-[G,0]) \AG(G,H)^{\wedge}_{I_G} ([H, i_H] - [H,0]),
\\ [S^0,\Sinf BH] &\cong [G,0] \AG(G,H)^{\wedge}_{I_G} ([H, i_H] - [H,0]) \cong 0,
\\ [\Sinf BG, S^0] &\cong ([G,i_G]-[G,0]) \AG(G,H)^{\wedge}_{I_G} [H,0] \cong \{X \times_e H\mid X\in A(G)^\wedge_{I_G}, \abs{X} = 0\},
\\ [S^0,S^0] &\cong [G,0] \AG(G,H)^{\wedge}_{I_G} [H,0] \cong \{a\cdot[G,0]_G^H \mid a\in \Z\}.
\end{align*}
The statement that $\Sinf BH$ is connected, so that $[S^0,\Sinf BH]=0$, corresponds to the algebraic fact $[G,0] \AG(G,H)^{\wedge}_{I_G} ([H, i_H] - [H,0]) \cong 0$, which is easily confirmed for each basis element $[K,\phi]_G^H\in \AG(G,H)^{\wedge}_{I_G}$:
\begin{align*}
[G,0]_G^G \times_G [K,\phi]_G^H \times_H ([H, i_H]_H^H - [H,0]_H^H) &= \abs{G/K}\cdot [G,0]_G^H \times_H ([H, i_H]_H^H - [H,0]_H^H)
\\ &=\abs{G/K} \cdot ([G,0]_G^H - [G,0]_G^H)
\\ &= 0.
\end{align*}

Further, this implies that
\[[\Sinf BG, \Sinf BH] \cong [\Sinfp BG, \Sinf BH] \cong \AG(G,H)^{\wedge}_{I_G} ([H, i_H] - [H,0])= \KG(G,H)^\wedge_{I_G}.\]
Given any map $f\colon \Sinfp BG\to \Sinfp BH$ represented by a virtual biset $X\in \AG(G,H)^\wedge_{I_G}$, we can find the part of $f$ that goes from $\Sinf BG$ to $\Sinf BH$ by the formula
\[X\times_H ([H, i_H] - [H,0]) = ([G,i_G]-[G,0])\times_G X\times_H ([H, i_H] - [H,0]) \in \KG(G,H)^\wedge_{I_G}.\]
Consequently, most of the results in this paper about $[\Sinfp BG, \Sinfp BH]$ can be converted into results about $[\Sinf BG, \Sinf BH]$ by multiplying with $([H, i_H] - [H,0])$ from the right.

\subsection{Fusion systems}\label{secFusionSystems}
We recall the very basics of the definition of a saturated fusion system. For additional details see \cite{ReehIdempotent}*{Section 2}, \cite{RagnarssonStancu}*{Section 2} or \cite{AKO}*{Part I}. We also discuss the construction of the classifying spectrum of a fusion system.

\begin{definition}
A fusion system on a finite $p$-group $S$ is a category $\cF$ with the subgroups of $S$ as objects and where the morphisms $\cF(P,Q)$ for $P,Q\leq S$ satisfy
\begin{itemize}
\item[(i)] Every morphism $\phi\in \cF(P,Q)$ is an injective group homomorphism $\phi\colon P\to Q$.
\item[(ii)] Every map $\phi\colon P\to Q$ induced by conjugation in $S$ is in $\cF(P,Q)$.
\item[(iii)] Every map $\phi\in\cF(P,Q)$ factors as $P\xrightarrow{\phi} \phi(P) \hookrightarrow Q$ in $\cF$ and the inverse isomorphism $\phi^{-1}\colon \phi(P)\to P$ is also in $\cF$.
\end{itemize}
A \emph{saturated} fusion system satisfies some additional axioms that we will not go through as they play no direct role in this paper.

We say that two subgroups, $P, Q \leq S$, are conjugate in $\cF$ if $Q = \phi(P)$ for some morphism $\phi$ in the fusion system $\cF$.

Given fusion systems $\cF_1$ and $\cF_2$ on $p$-groups $S_1$ and $S_2$, respectively, a group homomorphism $\phi\colon S_1\to S_2$ is said to be fusion preserving if whenever $\psi\colon P\to Q$ is a map in $\cF_1$, there is a corresponding map $\rho\colon \phi(P)\to \phi(Q)$ in $\cF_2$ such that $\phi|_{Q}\circ \psi = \rho\circ \phi|_P$. Note that each such $\rho$ is unique if it exists.
\end{definition}

\begin{example}
Whenever $G$ is a finite group with Sylow $p$-subgroup $S$, we associate a fusion system on $S$ denoted $\cF_G$. The maps in $\cF_G(P,Q)$ for subgroups $P,Q\leq S$ are precisely the homomorphisms $P\to Q$ induced by conjugation in $G$.
The fusion system $\cF_G$ associated to a group at a prime $p$ is always saturated.
\end{example}

Every saturated fusion system $\cF$ has a classifying spectrum originally constructed by Broto-Levi-Oliver in \cite{BLO2}*{Section 5}. The most direct way of constructing this spectrum is due to Ragnarsson and Stancu in \cite{Ragnarsson}*{Section 7} and \cite{RagnarssonStancu}*{Section 9.3}. From the data of a saturated fusion system $\cF$, they construct (see \cite{Ragnarsson}*{Definition 4.3}) an idempotent, called the characteristic idempotent, $\omega_\cF \in \AG(S,S)^{\wedge}_{p}$. Applying the Segal conjecture, this data is equivalent to a map of spectra
\[
\omega_\cF \colon \Sinfp BS \to \Sinfp BS.
\]
The classifying spectrum of $\cF$ is defined to be the mapping telescope
\[
\Sinfp B\cF = \colim ( \Sinfp BS \lra{\omega_{\cF}} \Sinfp BS \lra{\omega_{\cF}} \ldots).
\]
By construction $\Sinfp B\cF$ is a wedge summand of $\Sinfp BS$. The transfer map
\[
t \colon \Sinfp B\cF \rightarrow \Sinfp BS
\]
is the inclusion of $\Sinfp B\cF$ as a summand and the ``inclusion" map
\[
r \colon \Sinfp BS \rightarrow \Sinfp B\cF
\]
is the projection on $\Sinfp B\cF$. Thus $r \circ t = 1$ and $t\circ r=\omega_{\cF}$.

As remarked in Section 5 of \cite{BLO2}, the spectrum $\Sinfp B\cF$ constructed this way is in fact the suspension spectrum for the classifying space $B\cF$ defined in \cites{BLO2, Chermak}. One way to see this is to note that $H^*(B\cF,\F_p)$ coincides with $\omega_\cF \cdot H^*(BS,\F_p)$ as the $\cF$-stable elements, and by an argument similar to Proposition \ref{propCompleteBG} later on, the suspension spectrum of $B\cF$ is $H\F_p$-local.

\subsection{Burnside modules for fusion systems}

Fix a prime $p$.
\begin{definition}
Let $\AF$ be the Burnside category of saturated fusion systems. The objects in this category are saturated fusion systems $(\cF,S)$ over finite $p$-groups. Let $\cF_1$ and $\cF_2$ be saturated fusion systems on $p$-groups $S_1$ and $S_2$. The morphisms in $\AF$ between $(\cF_1,S_1)$ and $(\cF_2,S_2)$ are a certain submodule of the Burnside module (see \cite{ReehIdempotent}*{Definition 5.15})
\[
\AF(\cF_1,\cF_2) \subseteq \AF(S_1,S_2) = \AG(S_1,S_2)^{\wedge}_{p}.
\]
This is the submodule of the $p$-complete Burnside module $\AG(S_1, S_2)^{\wedge}_{p}$ consisting of left $\cF_1$-stable and right $\cF_2$-stable elements.
\end{definition}

Stability may be defined in two ways. We say that an element $X \in \AF(S_1, S_2)$ is left $\cF_1$-stable if $\omega_1 \circ X = X$ and right $\cF_2$-stable if $X \circ \omega_2 = X$, where $\omega_1$ and $\omega_2$ are the characteristic idempotents of $\cF_1$ and $\cF_2$. Algebraically, the definition is longer but more elementary. An $(S_1,S_2)$-biset $X$ is left $\cF_1$-stable if for all pairs of subgroups $P,Q \subset S_1$ and any isomorphism $\phi \colon P \cong_{\cF_1} Q$ in $\cF_1$ the $(P,S_1)$-sets ${}_PX_{S_1}$ and ${}_{P}^{\phi}X_{S_1}$ are isomorphic, where ${}_{P}^{\phi}X_{S_1}$ is the biset induced by restriction along
\[
P \lra{\phi} Q \subset S_1.
\]
Right stability is defined similarly. The Burnside module $\AF(\cF_1,\cF_2)$ is the $p$-completion of the Grothendieck group of left $\cF_1$-stable right $\cF_2$-stable $(S_1,S_2)$-bisets (\cite[Proposition 4.4]{ReehStableSets}). For short, we will call such left $\cF_1$-stable right $\cF_2$-stable elements $(\cF_1,\cF_2)$-stable or just stable if $\cF_1$ and $\cF_2$ are clear from context.

Since $\AF(S_1,S_2) = \AG(S_1,S_2)^\wedge_p$, the Burnside module $\AF(S_1,S_2)$ is the free $\Z_p$-module on bisets of the form $[K, \phi]_{S_1}^{S_2}$. Similarly, it follows from \cite[Proposition 5.2]{Ragnarsson} that $\AF(\cF_1,\cF_2)$ is a free $\Z_p$-module on basis elements denoted $[K,\phi]_{\cF_1}^{\cF_2}$, and given by
\[
[K,\phi]_{\cF_1}^{\cF_2} = \omega_{\cF_1} \circ [K,\phi]_{S_1}^{S_2} \circ \omega_{\cF_2},
\]
where $K\leq S_1$ and $\phi\colon K\to S_2$. Just as for a finite group, if we precompose $\phi$ with an isomorphism from $\cF_1$ and postcompose $\phi$ with an isomorphism from $\cF_2$, we get the same basis element in $\AF(\cF_1,\cF_2)$.

In order to clarify that a stable $(S_1,S_2)$-biset $X$ is being viewed as an element in $\AF(\cF_1, \cF_2)$ we will write ${}_{\cF_1}X_{\cF_2}$. Given a third saturated fusion system $\cF_3$ on $S_3$ and bisets ${}_{\cF_1}X_{\cF_2}$ and ${}_{\cF_2}Y_{\cF_3}$, we will denote the composite biset by
\[
{}_{\cF_1}X \times_{S_2} Y_{\cF_3} = ({}_{\cF_1}X_{\cF_2}) \times_{S_2} ({}_{\cF_2}Y_{\cF_3}).
\]

Given finite groups $G$ and $H$ with Sylow subgroups $S$ and $T$ and a $(G,H)$-biset $X$, we may restrict the $G$-action to $S$ and the $H$-action to $T$ to get an $(S,T)$-biset ${}_{S}X_{T}$. Let $\cF_G$ be the saturated fusion system associated to $G$ on $S$ and $\cF_H$ the saturated fusion system associated to $H$ on $T$. We leave it as an exercise to the reader to check that the restricted biset  ${}_{S}X_{T}$ is always a stable biset and so we may further consider it as an $(\cF_G, \cF_H)$-stable biset
\[
{}_{\cF_G}X_{\cF_H}.
\]

We turn our attention to Burnside rings for saturated fusion systems. Recall that the composite
\[
A^{\text{char}}(G) \rightarrow \AG(G,G) \rightarrow \AG(G,e) \cong A(G)
\]
of Section \ref{sec:burnside} is an isomorphism and identifies the Burnside ring $A(G)$ with the subring of $\AG(G,G)$ on the semicharacteristic $(G,G)$-bisets.

In the same way, there are two versions of the Burnside ring associated to a fusion system $\cF$ on a $p$-group $S$. The first, denoted $A(\cF)$, is the subring of $\cF$-stable elements of $A(S)$. The second is the subring of $A_{p}^{\text{char}}(\cF) \subseteq \AF(\cF,\cF) \subseteq \AF(S,S)$ consisting of $\cF$-semicharacteristic bisets. This is the $\Z_p$-submodule spanned by the basis elements of the form $[K,i_K]_{\cF}^{\cF}$. The identity element in $A_{p}^{\text{char}}(\cF)$ is the characteristic idempotent. The units of $A_{p}^{\text{char}}(\cF)$ are usually referred to as the $\cF$-characteristic elements, and each of them contains enough information to reconstruct $\cF$ (see \cite{RagnarssonStancu}*{Theorem 5.9}).

The commutative rings $A(\cF)$ and $A_{p}^{\text{char}}(\cF)$ may be canonically identified after $p$-completion, but it is useful to distinguish between the two. The (non-multiplicative) map $\epsilon \colon \AG(S,S) \rightarrow A(S)$ induces a map
\[
A_{p}^{\text{char}}(\cF) \subset \AF(\cF,\cF) \rightarrow A(\cF)^{\wedge}_{p},
\]
which is a ring isomorphism by Theorem D in \cite{ReehIdempotent}.

Let $I_\cF$ be the kernel of the augmentation map
\[
I_{\cF} = \ker(A(\cF) \rightarrow A(S) \rightarrow \Z).
\]
\begin{remark}\label{AFLocal}
The ring $A(\cF)^{\wedge}_{p}$ is clearly $p$-complete. It is also complete with respect to the maximal ideal $(p)+I_\cF$ of $A(\cF)$ and these ideals give the same topology. This follows immediately from Lemma \ref{lemmaIpadic}, which states that the ideals $(p)$ and $(p)+I_S$ give the same topology on $A(S)$.

As the completion of $A(\cF)$ with respect to the maximal ideal $(p)+I_\cF$ the ring $A(\cF)^{\wedge}_{p}$ is in fact complete local with maximal ideal $(p)+I_\cF$.
\end{remark}


\subsection{$p$-completion}

Let $E$ be a spectrum. There is a $p$-completion functor on spectra equipped with a canonical transformation
\[
E \rightarrow E^{\wedge}_{p}.
\]
This functor is given by Bousfield localization at the Moore spectrum $M\Z/p$ (see \cite{Bousfield2}*{Proposition 2.5}). When $E$ is connective, for instance if $E$ is the classifying spectrum of a finite group, then $E^{\wedge}_{p}$ is also the localization of $E$ at $H\F_p$. There is a natural equivalence
\[
(\Sinfp BG)^{\wedge}_{p} \simeq (\Sinf BG)^{\wedge}_{p} \vee (S^0)^{\wedge}_{p}.
\]
The arithmetic fracture square immediately implies that
\[
\Sinf BG \simeq \bigvee_p (\Sinf BG)^{\wedge}_p.
\]
If $S$ is a $p$-group, then $\Sinf BS \simeq (\Sinf BS)^{\wedge}_{p}$ so
\[
(\Sinfp BS)^{\wedge}_{p} \simeq \Sinf BS \vee (S^0)^{\wedge}_{p}.
\]
When the prime is clear from context and $X$ is a space, we will write
\[
\Sinfpp X = (\Sinfp X)^{\wedge}_{p}
\]
for the $p$-completion of the suspension spectrum with a disjoint basepoint and, if $X$ is pointed, $\Sinfh X$ for $(\Sinf X)^{\wedge}_{p}$, the $p$-completion of the suspension spectrum.


There are several notions of $p$-completion for spaces. These were developed in \cite{Bousfield}, \cite{BousfieldKan}, \cite{Sullivan1}, and \cite{Sullivan2}. For a space such as $BG$, these notions all agree and there is a simple relationship between the stable $p$-completion and the unstable $p$-completion. Since it was difficult to find a proof of this fact in the literature, we provide a complete proof. Note that, since this paper was first made available, this folklore result has also appeared in \cite{BB}.

\begin{proposition}\label{propCompleteBG}
Let $G$ be a finite group. There is a canonical equivalence
\[
\Sinf (BG^{\wedge}_{p}) \simeq (\Sinf BG)^{\wedge}_{p}.
\]
\end{proposition}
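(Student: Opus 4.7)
The plan reduces the proposition to two claims that can be proved separately and combined: (a) the map of spectra $\Sinf BG \to \Sinf(BG^{\wedge}_p)$ induced by the space-level $p$-completion is an $H\F_p$-equivalence, and (b) the spectrum $\Sinf(BG^{\wedge}_p)$ is already $p$-complete. Given both claims, the map from (a) factors uniquely through the source's $p$-completion by the universal property of $H\F_p$-localization on connective spectra, and the resulting map $(\Sinf BG)^{\wedge}_p \to \Sinf(BG^{\wedge}_p)$ is an $H\F_p$-equivalence between two $H\F_p$-local connective spectra, hence the desired canonical equivalence.

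Claim (a) is quick. The Bousfield--Kan $\F_p$-completion $BG \to BG^{\wedge}_p$ is an $\F_p$-cohomology equivalence (as $BG$ is $\F_p$-good for $G$ finite), and $\Sinf$ preserves $\F_p$-cohomology, so the induced map of spectra is an $H\F_p$-equivalence.

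Claim (b) is the substantive content. My approach is to use a Sylow $p$-subgroup $S \subseteq G$, for which $\Sinf BS$ is already $p$-complete since $S$ is a finite $p$-group. The space-level composite $BS \hookrightarrow BG \to BG^{\wedge}_p$ yields a map $\iota \colon \Sinf BS \to \Sinf(BG^{\wedge}_p)$, and, using (a) together with the $H\F_p$-locality of $\Sinf BS$, the Becker--Gottlieb transfer $\tau \colon \Sinf BG \to \Sinf BS$ extends uniquely along the $H\F_p$-equivalence to a map $\tau' \colon \Sinf(BG^{\wedge}_p) \to \Sinf BS$. The classical transfer identity $\iota_G \circ \tau = [G:S] \cdot \id_{\Sinf BG}$ propagates, after composition with the $p$-completion $\eta \colon \Sinf(BG^{\wedge}_p) \to \Sinf(BG^{\wedge}_p)^{\wedge}_p$ (where uniqueness now holds since the target is $H\F_p$-local), to the identity $[G:S] \cdot \id$ on the $p$-completion. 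This exhibits a canonical section $s$ of $\eta$ and gives a decomposition $\Sinf(BG^{\wedge}_p) \simeq (\Sinf(BG^{\wedge}_p))^{\wedge}_p \vee K$ for some connective $H\F_p$-acyclic summand $K$.

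The main obstacle is ruling out the summand $K$. My plan is to observe that $H_*(BG^{\wedge}_p; \Z)$ is degreewise a finitely generated $\Z_p$-module, a consequence of $\F_p$-completeness combined with the degreewise finiteness of $H_*(BG; \F_p) = H_*(BG^{\wedge}_p; \F_p)$, and then run the Atiyah--Hirzebruch spectral sequence for $\pi_*^s(BG^{\wedge}_p)$ to deduce that these stable homotopy groups are likewise finitely generated $\Z_p$-modules, hence derived $p$-complete. Since $\pi_*(K)$ is a priori a $\Z[1/p]$-submodule of these $p$-complete modules (because $p$ acts invertibly on the homotopy of any $H\F_p$-acyclic spectrum), $\pi_*(K) = 0$ and therefore $K = 0$, establishing claim (b) and completing the proof.
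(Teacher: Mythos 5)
Your overall strategy matches the paper's: show the map $\Sinf BG \to \Sinf(BG^{\wedge}_p)$ is an $H\F_p$-equivalence and that $\Sinf(BG^{\wedge}_p)$ is $H\F_p$-local, then conclude. Claim (a) is fine, but the route you take to claim (b) contains a genuine error. The identity $\iota_G \circ \tau = [G{:}S]\cdot \id_{\Sinf BG}$ is false at the spectrum level: the composite $\Sinf BG \xrightarrow{\tau} \Sinf BS \xrightarrow{i} \Sinf BG$ corresponds to the $(G,G)$-biset $G\times_S G = [S,i_S]_G^G$, which is a single transitive biset with point stabilizer $S$, not $[G{:}S]$ copies of the identity biset $[G,\id_G]_G^G$. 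What is true is that $i\circ\tau$ induces multiplication by $[G{:}S]$ on $H^*(-;\F_p)$, so $i\circ\tau$ is an $H\F_p$-equivalence — but that is a much weaker statement and does not hand you a section of $\eta$ or the splitting $\Sinf(BG^{\wedge}_p)\simeq (\Sinf(BG^{\wedge}_p))^{\wedge}_p\vee K$. Without that splitting the remainder of your argument (showing $\pi_*(K)=0$) has nothing to apply to. In fact, the whole transfer detour is also unnecessary: once you know $\pi_*^s(BG^{\wedge}_p)$ is degreewise a finitely generated $\Z_p$-module, the connective spectrum $\Sinf(BG^{\wedge}_p)$ is already $H\F_p$-local — no retraction off $\Sinf BS$ is needed.

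The second gap is in the assertion that $H_*(BG^{\wedge}_p;\Z)$ is degreewise a finitely generated $\Z_p$-module ``as a consequence of $\F_p$-completeness combined with the degreewise finiteness of $H_*(BG^{\wedge}_p;\F_p)$.'' Finiteness of $\F_p$-homology only controls $H_*(-;\Z)/p$ and $\operatorname{Tor}(H_{*-1}(-;\Z),\F_p)$; by itself it does not rule out contributions such as $\Z[1/p]$-modules, and ``$\F_p$-complete'' is not obviously enough to close this gap in the non-simply-connected setting. The paper handles this by going to the source: Bousfield--Kan VII.4.3 gives that $\pi_*(BG^{\wedge}_p)$ are finite $p$-groups, and a Serre-class argument with the universal-cover fibration then shows $\widetilde{H}_*(BG^{\wedge}_p;\Z)$ are finite $p$-groups (and hence $\pi_*^s$ are finite $p$-groups by Atiyah--Hirzebruch). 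From there the paper finishes with a self-contained Postnikov-tower argument that a connective spectrum with finite $p$-group homotopy groups is $H\F_p$-local, which avoids citing the algebraic characterization of $p$-completeness that your ``hence derived $p$-complete'' step implicitly relies on. If you drop the transfer argument and replace your homology-finiteness claim with the Bousfield--Kan input plus the universal-cover argument, your proof essentially becomes the paper's.
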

\begin{proof}
It follows from \cite[VII.4.3]{BousfieldKan} that the unstable homotopy groups $\pi_*(BG^{\wedge}_{p})$ are all finite $p$-groups.

This implies that the reduced integral homology groups $\widetilde{H\Z}_*(BG^{\wedge}_{p})$ are all finite $p$-groups: Let $\widetilde{BG^{\wedge}_{p}}$ be the universal cover. A Serre class argument with the Serre spectral sequence associated to the fibration
\[
\widetilde{BG^{\wedge}_{p}} \rightarrow BG^{\wedge}_{p} \rightarrow K(\pi_1(BG^{\wedge}_{p}),1)
\]
reduces this problem to the group homology of $\pi_1(BG^{\wedge}_{p})$ with coefficients in the integral homology of the fiber. The result follows from the fact that the reduced homology groups of the fiber are $p$-groups and that the integral group homology groups of $\pi_1(BG^{\wedge}_{p})$ are $p$-groups. Both of these are classical (see \cite[Chapter 10]{DavisKirk} for the fiber, for instance).

This implies that the stable homotopy groups $\pi_*(\Sinf (BG^{\wedge}_{p}))$ are all finite $p$-groups: This is a Serre class argument with the (convergent) Atiyah-Hirzebruch spectral sequence.

This implies that the spectrum $\Sinf (BG^{\wedge}_{p})$ is $p$-complete: As $\Sinf (BG^{\wedge}_{p})$ is connective, it suffices to prove that the spectrum is $H\F_p$-local. Let $X$ be an $H\F_p$-acyclic spectrum. Let $Y_i$ be the $i$th stage in the Postnikov tower for $\Sinf (BG^{\wedge}_{p})$ so that
\[
\Sinf (BG^{\wedge}_{p}) \simeq \lim Y_i
\]
and
\[
\Sinf (BG^{\wedge}_{p})^X \simeq \lim Y_{i}^{X}.
\]
We would like to show that this spectrum is zero. Let $K_i$ be the fiber of the map $Y_i \rightarrow Y_{i-1}$. By induction, it suffices to prove the $K_{i}^{X} \simeq 0$.

There is an equivalence $K_i \simeq \Sigma^k HA$ for a finite abelian $p$-group group $A$. Thus it suffices to show that $(\Sigma^k H\Z/p^l)^X \simeq 0$ for all $l$. By induction on the fiber sequence
\[
\Sigma^k H\F_p \rightarrow \Sigma^k H\Z/p^l \rightarrow \Sigma^k H\Z/p^{l-1}
\]
it suffices to prove that $(\Sigma^k H\F_p)^X \simeq 0$. This is the spectrum of $H\F_p$-module maps $\mathrm{Mod}_{H\F_p}(H\F_p \wedge X, \Sigma^k H\F_p)$. By assumption $H\F_p \wedge X \simeq *$.

This implies that the canonical map
\[
\Sinf BG \lra{} \Sinf (BG^{\wedge}_{p})
\]
factors through
\[
(\Sinf BG)^{\wedge}_{p} \lra{} \Sinf (BG^{\wedge}_{p})
\]
which is an $H\F_p$-homology equivalence between $H\F_p$-local spectra and thus is an equivalence.
\end{proof}

When restricted to the homotopy category of classifying spectra of finite groups, the $p$-completion functor has a simple description. We have not found this fact in the literature.

\begin{proposition} \label{algebraic}
The $p$-completion functor on spectra induces an isomorphism
\[
[\Sinfp BG, \Sinfp BH]^{\wedge}_{p} \lra{\cong} [\Sinfpp BG, \Sinfpp BH].
\]
\end{proposition}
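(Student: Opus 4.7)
The plan has two main steps, both exploiting universal properties of Bousfield localization.

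First, I would use the universal property of $p$-completion (Bousfield localization at $M\Z/p$): since $\Sinfpp BH$ is $p$-complete, every map from $\Sinfp BG$ into it factors uniquely through its $p$-completion $\Sinfpp BG$, giving a canonical isomorphism $[\Sinfp BG, \Sinfpp BH] \cong [\Sinfpp BG, \Sinfpp BH]$. This reduces the proposition to exhibiting a natural isomorphism
\[
[\Sinfp BG, \Sinfp BH]^{\wedge}_{p} \cong [\Sinfp BG, \Sinfpp BH]
\]
induced by post-composition with the completion map $\Sinfp BH \to \Sinfpp BH$.

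Second, I would analyze this post-composition map via the arithmetic fracture square. Let $C$ denote the cofiber of $\Sinfp BH \to \Sinfpp BH$. The fracture pullback
\[
\xymatrix{
\Sinfp BH \ar[r] \ar[d] & \Sinfpp BH \ar[d] \\
(\Sinfp BH)_{\Q} \ar[r] & (\Sinfpp BH)_{\Q}
}
\]
identifies $C$ with the cofiber of a map between rational spectra, so $C$ is rational. Hence $[\Sinfp BG, \Sigma^{k} C]$ is a rational abelian group for every $k$, and the long exact sequence on $[\Sinfp BG, -]$ associated to $\Sinfp BH \to \Sinfpp BH \to C$ shows that the kernel and cokernel of the natural map $[\Sinfp BG, \Sinfp BH] \to [\Sinfp BG, \Sinfpp BH]$ are both uniquely $p$-divisible. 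Combined with the fact that $[\Sinfp BG, \Sinfpp BH]$ is derived $p$-complete as an abelian group (because the mapping spectrum $\mathrm{Map}(\Sinfp BG, \Sinfpp BH)$ is itself $M\Z/p$-local, as smashing with $\Sinfp BG$ preserves $M\Z/p$-acyclicity, so its $\pi_{0}$ is derived $p$-complete), the general principle that a map into a derived $p$-complete target with rational kernel and cokernel is the $p$-completion of the source yields the desired identification.

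The main obstacle I anticipate is the purely algebraic verification that a homomorphism $f\colon A \to B$ of abelian groups whose kernel and cokernel are rational vector spaces and whose target is derived $p$-complete necessarily exhibits $B$ as the derived $p$-completion $A^{\wedge}_{p}$. This reduces, via the short exact sequences $0 \to \ker f \to A \to \mathrm{im}\,f \to 0$ and $0 \to \mathrm{im}\,f \to B \to \mathrm{coker}\,f \to 0$ and the vanishing of derived $p$-completion on $\Q$-vector spaces, to a diagram chase, but matching this with the spectrum-level $p$-completeness rests on the standard (but not entirely trivial) fact that homotopy groups of $M\Z/p$-local spectra are derived $p$-complete, which is where care is needed for a spectrum as large as $\Sinfpp BH$.
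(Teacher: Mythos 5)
Your approach is genuinely different from the paper's, and it has a real gap. The paper first splits $[\Sinfp BG, \Sinfp BH]$ into a finite direct sum (roughly $[S^0,S^0]\oplus[\Sinf BG, S^0]\oplus[\Sinfp BG, \Sinf BH]$), uses the arithmetic fracture square to write $\Sinf BH$ as a \emph{finite} wedge $\bigvee_l (\Sinf BH)^{\wedge}_l$ over primes $l$ dividing $|H|$, and then invokes Bousfield's result that each $[\Sinfp BG, (\Sinf BH)^{\wedge}_l]$ is already algebraically ($\mathrm{Ext}$-) $l$-complete; algebraic $p$-completion then simply picks out the $l=p$ summand, with the $[\Sinf BG, S^0]$ and $[S^0,S^0]$ pieces handled similarly. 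Your argument instead runs a single long exact sequence against the cofiber $C$ of $\Sinfp BH\to\Sinfpp BH$.

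The gap is in the cokernel step. The long exact sequence exhibits $\ker(f)$ as a \emph{quotient} of the $\Q$-vector space $[\Sinfp BG, \Sigma^{-1}C]$, and a quotient of a $\Q$-vector space is again one, so the kernel is fine. But $\mathrm{coker}(f)\cong\mathrm{im}(g)$ is only a \emph{subgroup} of the $\Q$-vector space $[\Sinfp BG, C]$, and a subgroup of a $\Q$-vector space is torsion-free but need not be divisible (e.g.\ $\Z\subset\Q$). Your ``general principle'' really needs $\mathrm{coker}(f)$ to have vanishing derived $p$-completion, and $p$-torsion-freeness is not enough: $\Z$ and $\Z_p$ are $p$-torsion-free yet have nontrivial derived $p$-completion. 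To close this you would need extra input, e.g.\ showing the boundary map $[\Sinfp BG, C]\to[\Sinfp BG, \Sigma\Sinfp BH]$ vanishes (so that the cokernel is all of $[\Sinfp BG, C]$), or arguing at the spectrum level that $F(\Sinfp BG, \Sinfp BH)^{\wedge}_p \to F(\Sinfp BG, \Sinfpp BH)$ is an equivalence because $F(\Sinfp BG, C)$ is rational and hence $p$-acyclic, and then controlling the Milnor sequence for $\pi_0$ of the $p$-completion. Relatedly, the final passage from the derived $p$-completion $L_0 A$ to the classical $p$-completion $A^{\wedge}_p$ of $A=[\Sinfp BG, \Sinfp BH]$ is only gestured at; in the paper's proof this step is implicit because each summand in the finite decomposition is either $\Z$ or already $l$-complete for a single prime $l$, so classical and derived completions visibly agree.
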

\begin{proof}
We have splittings
\[
[\Sinfp BG, \Sinfp BH] \cong [S^0, S^0] \oplus [\Sinf BG, S^0] \oplus [\Sinfp BG, \Sinf BH].
\]
and
\[
\Sinf BG \simeq \bigvee_{l} (\Sinf BG)^{\wedge}_{l} \text{ and } \Sinf BH \simeq \bigvee_{l} (\Sinf BH)^{\wedge}_{l},
\]
where the wedges are over primes dividing the order of the group. Since
\[
[X, (\Sinf BH)^{\wedge}_{l}]
\]
is $l$-complete for finite type $X$ and a prime $l$ and thus algebraically $l$-complete (\cite{Bousfield2}*{Proposition 2.5}), the (algebraic) $p$-completion of the abelian group
\[
[\Sinfp BG, \Sinf BH]
\]
is $[\Sinfp BG, \Sinfh BH]$.
The algebraic $p$-completion of $[S^0,S^0]$ is clearly $\Z_p \cong [S^0, (S^0)^{\wedge}_{p}]$.

Finally, we must deal with $[\Sinf BG, S^0]$. However, since $\Sinf BG$ is connected, maps from $\Sinf BG$ to $S^0$ factor through the connected cover of $S^0$, $\tilde{S}^0$. The connected cover of $S^0$ has trivial rational cohomology, so the arithmetic fracture square gives a splitting
\[
\tilde{S}^0 \simeq \bigvee_{l} (\tilde{S}^0)^{\wedge}_{l} \simeq \Prod{l} (\tilde{S}^0)^{\wedge}_{l}.
\]
The second equivalence is a consequence of the fact that $\pi_i S^0$ is finite above degree zero. Thus the group $[\Sinf BG, S^0]$ appears to be an infinite product, however the $l$-completion of $\Sinf BG$ is contractible for $l$ not dividing the order of $G$. Thus the product
\[
\Prod{l} [\Sinf BG, (\tilde{S}^0)^{\wedge}_{l}]
\]
has a finite number of non-zero factors. The $p$-completion of this product is just the factor corresponding to the prime $p$.
\end{proof}

\section{A formula for the $p$-completion functor}\label{secAFormulaForPCompletion}
Fix a prime $p$. We give an explicit formula for the $p$-completion functor from virtual $(G,H)$-bisets to $p$-complete spectra sending a biset
\[
X \colon \Sinfp BG \rightarrow \Sinfp BH
\]
to the $p$-completion
\[
X^{\wedge}_{p} \colon \Sinfpp BG \rightarrow \Sinfpp BH.
\]

\subsection{Further results on Burnside modules for fusion systems}
We begin with a result describing how Burnside modules for fusion systems relate to the stable homotopy category. Let $G$ and $H$ be finite groups and let $S \subseteq G$ and $T \subseteq H$ be fixed Sylow $p$-subgroups. Let $\cF_G$ and $\cF_H$ be the fusion systems on $S$ and $T$ determined by $G$ and $H$. Recall that there are natural forgetful maps
\[
\AG(G,H) \rightarrow \AG(S,T)
\]
and
\[
\AF(\cF_G, \cF_H) \hookrightarrow \AF(S,T).
\]
In the stable homotopy category the analogous maps are the map
\[
[\Sinfp BG, \Sinfp BH] \rightarrow [\Sinfp BS, \Sinfp BT]
\]
given by composing with the inclusion from $\Sinfp BS$ and the transfer to $\Sinfp BT$ and the map
\[
[\Sinfpp B\cF_G, \Sinfpp B\cF_H] \rightarrow [\Sinfpp BS, \Sinfpp BT]
\]
given by precomposing with the restriction $r$ and postcomposing with the transfer $t$ (see Section \ref{secFusionSystems}). We may use this to construct a map
\[
[\Sinfp BG, \Sinfp BH] \rightarrow [\Sinfpp B\cF_G, \Sinfpp B\cF_H]
\]
by sending $X$ to the $p$-completion of the composite
\[
\Sinfp B\cF_G \lra{t} \Sinfp BS \lra{{}_{S}G_{G}} \Sinfp BG \lra{X} \Sinfp BH \lra{{}_{H}H_{T}} \Sinfp BT \lra{r} \Sinfp B\cF_H.
\]

\begin{proposition} \label{aniso}
Let $\cF_1$ and $\cF_2$ be saturated fusion systems on the $p$-groups $S_1$ and $S_2$. There is a canonical isomorphism
\[
\AF(\cF_1, \cF_2) \lra{\cong} [\Sinfpp B\cF_1, \Sinfpp B\cF_2].
\]
\end{proposition}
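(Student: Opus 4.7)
The plan is to identify both sides with explicit subgroups of $[\Sinfpp BS_1, \Sinfpp BS_2] \cong \AF(S_1, S_2)$ and check that the subgroups coincide.

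First I would unpack the retract structure. By Ragnarsson's mapping telescope construction (recalled in Section \ref{secFusionSystems}), $\Sinfp B\cF_i$ is a wedge summand of $\Sinfp BS_i$ with inclusion $t_i$ and projection $r_i$ satisfying $r_i \circ t_i = \Id$ and $t_i \circ r_i = \omega_{\cF_i}$. The functor $(-)^{\wedge}_p$ preserves retracts, so after $p$-completion $\Sinfpp B\cF_i$ is a retract of $\Sinfpp BS_i$ with the same relations (where $\omega_{\cF_i}$ now denotes the $p$-completed self-map). Consequently $(t_2)_* (r_1)^*$ identifies $[\Sinfpp B\cF_1, \Sinfpp B\cF_2]$ with the subgroup
\[
\{\, g \in [\Sinfpp BS_1, \Sinfpp BS_2] \mid \omega_{\cF_2}\circ g\circ \omega_{\cF_1} = g\,\}
\]
with inverse map $g \mapsto r_2 \circ g \circ t_1$.

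Next I would identify the ambient group algebraically. Proposition \ref{algebraic} combined with the Segal conjecture (Theorem \ref{SegalConj}) supplies a canonical isomorphism
\[
[\Sinfpp BS_1, \Sinfpp BS_2] \;\cong\; [\Sinfp BS_1, \Sinfp BS_2]^{\wedge}_p \;\cong\; \AG(S_1,S_2)^{\wedge}_p \;=\; \AF(S_1,S_2),
\]
and this isomorphism is compatible with composition since the Segal conjecture isomorphism is one of categories. The characteristic idempotent $\omega_{\cF_i}$ was defined by Ragnarsson so that it corresponds to the algebraic idempotent in $\AF(S_i,S_i)$ sharing the same name; this is the one point where I would pause to verify the identification, but it is essentially by construction.

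Putting these two ingredients together, the subgroup $\{g : \omega_{\cF_2} g \omega_{\cF_1} = g\}$ of $[\Sinfpp BS_1, \Sinfpp BS_2]$ is transported across the Segal isomorphism to the subgroup $\{X \in \AF(S_1,S_2) : \omega_{\cF_2} \circ X \circ \omega_{\cF_1} = X\}$, which is exactly $\AF(\cF_1,\cF_2)$ by the definition of left/right fusion stability. Composing with the retract identification from the first paragraph yields the desired canonical isomorphism. The main obstacle, as indicated, is not any single computation but the bookkeeping verification that the topologically defined characteristic idempotent $\omega_{\cF}$ matches the algebraic one under Segal; once this compatibility is in hand, everything else is formal manipulation of retracts and idempotent subgroups.
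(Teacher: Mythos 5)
Your proposal is correct and follows essentially the same path as the paper's proof: both identify the two sides as the images of the idempotent endomorphisms $\omega_{\cF_2}\circ(-)\circ\omega_{\cF_1}$ on $\AF(S_1,S_2)$ and on $[\Sinfpp BS_1, \Sinfpp BS_2]$, use Proposition \ref{algebraic} together with the Segal conjecture to identify the ambient groups, and note that the algebraic and spectral characteristic idempotents correspond by construction, so the isomorphism restricts to the retracts. Your version is a bit more explicit about the $t_i$, $r_i$ retract maps, but the argument is the same.
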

\begin{proof}
The abelian groups $\AF(S_1, S_2)$ and $[\Sinfpp BS_1, \Sinfpp BS_2]$ both have canonical idempotent endomorphisms given by precomposing and postcomposing with the characteristic idempotents associated to $\cF_1$ and $\cF_2$. The algebraic characteristic idempotent maps to the spectral characteristic idempotent by definition. This compatibility ensures that the images of these endomorphisms map to each other under the canonical isomorphism of Proposition \ref{algebraic}
\[
\AF(S_1, S_2) \lra{\cong} [\Sinfpp BS_1, \Sinfpp BS_2].
\]
The images of these endomorphisms are precisely the $(\cF_1, \cF_2)$-stable bisets and the homotopy classes
\[
[\Sinfpp B\cF_1, \Sinfpp B\cF_2].
\]
Since the retract of an isomorphism is an isomorphism, we have constructed a canonical isomorphism
\[
\AF(\cF_1, \cF_2) \lra{\cong} [\Sinfpp B\cF_1, \Sinfpp B\cF_2].
\qedhere\]
\end{proof}

\begin{proposition} \label{bigdiagram}
Let $G$ and $H$ be finite groups and let $S \subset G$ and $T \subset H$ be Sylow $p$-subgroups. Let $\cF_G$ and $\cF_H$ be the fusion systems on $S$ and $T$ determined by $G$ and $H$. There is a commutative diagram
\[
\xymatrix{\AG(G,H) \ar@/_4pc/[ddd] \ar[r] \ar[d] & [\Sinfp BG, \Sinfp BH] \ar[d] \ar@/^4pc/[ddd] \\ \AG(S,T) \ar[r] \ar[d]  & [\Sinfp BS, \Sinfp BT] \ar[d] \\ \AF(S,T) \ar[r]^-{\cong} & [\Sinfpp BS, \Sinfpp BT] \\ \AF(\cF_G, \cF_H) \ar[r]^-{\cong} \ar@{^{(}->}[u] & [\Sinfpp B\cF_G, \Sinfpp B\cF_H]. \ar@{^{(}->}[u]}
\]
\end{proposition}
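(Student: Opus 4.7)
My plan is to decompose the diagram into three internal squares (top, middle, bottom) together with the two outer curved arrows, verify commutativity of each internal square by naturality of the relevant identifications, and then observe that the outer composites match the curved arrows essentially by definition. For the top square, the right vertical map sends a stable map $f$ to the composite $\tau \circ f \circ \iota$, where $\iota \colon \Sinfp BS \to \Sinfp BG$ comes from the inclusion $S \subset G$ and $\tau \colon \Sinfp BH \to \Sinfp BT$ is the transfer. Under the Segal isomorphism of Theorem \ref{SegalConj}, $\iota$ and $\tau$ correspond to the bisets ${}_S G_G$ and ${}_H H_T$, and the composite biset ${}_S G_G \times_G X \times_H H_T$ is canonically isomorphic as an $(S,T)$-biset to the restriction ${}_S X_T$. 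Hence the top square commutes by functoriality of the Segal identification under biset composition.

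For the middle square, Lemma \ref{lemmaIpadic} yields $\AG(S,T)^{\wedge}_{I_S} = \AG(S,T)^{\wedge}_p = \AF(S,T)$, so Segal identifies $[\Sinfp BS, \Sinfp BT]$ with an already $p$-complete abelian group, and Proposition \ref{algebraic} then identifies this group in turn with $[\Sinfpp BS, \Sinfpp BT]$ via the $p$-completion map on spectra; commutativity is immediate from the naturality of $p$-completion. For the bottom square, both vertical inclusions are the images of an idempotent endomorphism given by composition with the characteristic idempotents: algebraically by $X \mapsto \omega_{\cF_G} \circ X \circ \omega_{\cF_H}$ and spectrally by $g \mapsto (t_H \circ r_H) \circ g \circ (t_G \circ r_G)$. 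By the argument in the proof of Proposition \ref{aniso}, the algebraic characteristic idempotents correspond to the spectral ones under the top horizontal isomorphism, and the bottom horizontal isomorphism is defined as the restriction to the image of these matching idempotents, so the bottom square commutes by construction.

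Finally, the two outer curved arrows agree with the internal compositions by definition. The left composite sends ${}_G X_H$ to ${}_S X_T$ viewed in $\AF(S,T)$, which equals ${}_{\cF_G} X_{\cF_H}$ since the restriction of a $G$-biset is automatically $(\cF_G, \cF_H)$-stable. The right composite is, by construction, the $p$-completion of $r_H \circ \tau \circ f \circ \iota \circ t_G$, which matches the explicit definition of the spectral outer map given just before the statement. The main obstacle is essentially bookkeeping: one must carefully identify each arrow in the diagram and match up the various canonical isomorphisms (Theorem \ref{SegalConj}, Propositions \ref{algebraic} and \ref{aniso}) together with their respective idempotents. No genuinely new technical ideas are required beyond naturality of the Segal identification and of the $p$-completion functor.
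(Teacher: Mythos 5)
Your overall decomposition (top, middle, bottom squares, then the two outer regions) mirrors the paper's proof, and your treatment of the three inner squares and the left region is sound: for the inner squares you cite the same ingredients as the paper (naturality of Theorem~\ref{SegalConj}, its corollaries via Lemma~\ref{lemmaIpadic} and Proposition~\ref{algebraic}, and the characteristic-idempotent argument from the proof of Proposition~\ref{aniso}); for the left region you correctly invoke the fact that restricting a $(G,H)$-biset to Sylow subgroups produces a $(\cF_G,\cF_H)$-stable biset.

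Your argument for the right region, however, is circular as written. You note that the right curved arrow is, by construction, $X\mapsto (r\circ\tau\circ X\circ\iota\circ t)^{\wedge}_p$, and then declare that this ``matches the explicit definition,'' which is just restating the definition of that arrow. What actually needs to be shown is that composing this curved arrow with the inclusion $[\Sinfpp B\cF_G,\Sinfpp B\cF_H]\hookrightarrow[\Sinfpp BS,\Sinfpp BT]$, namely $g\mapsto t\circ g\circ r$, agrees with the composite of the two inner vertical maps, i.e.\ with $X\mapsto(\tau\circ X\circ\iota)^{\wedge}_p$. Unwinding, one gets $(\omega_{\cF_H}\circ\tau\circ X\circ\iota\circ\omega_{\cF_G})^{\wedge}_p$, and one must argue that the characteristic idempotents act trivially here --- this is precisely the stability argument you already used for the left region (${}_SG_G$ is left $\cF_G$-stable and ${}_HH_T$ is right $\cF_H$-stable, or equivalently ${}_SX_T$ is bistable). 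The paper spells this out explicitly; you should do the same on the right side rather than appealing to ``construction.''
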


\begin{proof}
The top center square commutes by naturality of Theorem \ref{SegalConj}. The middle center square commutes by the corollaries to Theorem \ref{SegalConj}. The bottom middle square commutes by the discussion in the proof of Proposition \ref{aniso}.

The left part of the diagram commutes as the restriction of a $(G,H)$-biset is bistable. Note that the map
\[
[\Sinfpp B\cF_G, \Sinfpp B\cF_H] \rightarrow [\Sinfpp BS, \Sinfpp BT]
\]
is given by the formula
\[
f \mapsto tfr.
\]
The right part of the diagram commutes as the $p$-completion of the composite
\[
\Sinfp B\cF_G \lra{t} \Sinfp BS \lra{{}_{S}G_{G}} \Sinfp BG \lra{X} \Sinfp BH \lra{{}_{H}H_{T}} \Sinfp BT \lra{r} \Sinfp B\cF_H
\]
maps to the $p$-completion of the composite
\[
\Sinfp BS \lra{\omega_{\cF_G}} \Sinfp BS \lra{{}_{S}G_{G}} \Sinfp BG \lra{X} \Sinfp BH \lra{{}_{H}H_{T}} \Sinfp BT \lra{\omega_{\cF_H}} \Sinfp BT
\]
and stability implies that this is equal to the $p$-completion of
\[
\Sinfp BS \lra{{}_{S}G_{G}} \Sinfp BG \lra{X} \Sinfp BH \lra{{}_{H}H_{T}} \Sinfp BT.
\qedhere\]
\end{proof}

Proposition \ref{bigdiagram} gives an interpretation of the biset construction
\[
{}_{G}X_{H} \mapsto {}_{\cF_G}X_{\cF_H}
\]
in terms of spectra. It is the $p$-completion of the composite
\[
\Sinfp B\cF_G \lra{t} \Sinfp BS \lra{{}_{S}G_{G}} \Sinfp BG \lra{X} \Sinfp BH \lra{{}_{H}H_{T}} \Sinfp BT \lra{r} \Sinfp B\cF_H.
\]

Now we focus on the relationship between $\Sinfpp B\cF_G$ and $\Sinfpp BG$. Applying $p$-completion to the maps $t \colon \Sinfp B\cF_G \rightarrow \Sinfp BS$ and ${}_{S}G_{G} \colon \Sinfp BS \rightarrow \Sinfp BG$ gives us the commutative diagram
\begin{equation} \label{aGdiagram}
\xymatrix{\Sinfp B\cF_G \ar[r]^{t} \ar[d] & \Sinfp BS \ar[r]^{{}_{S}G_{G}} \ar[d] & \Sinfp BG \ar[d] \\ \Sinfpp B\cF_G \ar[r] \ar@/_1pc/[rr]_{a_G}  & \Sinfpp BS  \ar[r] & \Sinfpp BG.}
\end{equation}
The map $a_G$ is the composite of the bottom arrows.

\begin{proposition} \label{aG} (Essentially \cite{CartanEilenberg}*{XII.10.1} and \cite{BLO2}*{Proposition 5.5})
The map
\[
a_G \colon \Sinfpp B\cF_G \rightarrow \Sinfpp BG
\]
is an equivalence.
\end{proposition}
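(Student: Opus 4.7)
The plan is to reduce the claim to a mod-$p$ cohomology computation via Whitehead's theorem for $p$-complete connective spectra. Both $\Sinfpp B\cF_G$ and $\Sinfpp BG$ are $p$-complete and connective by construction, so $a_G$ is an equivalence if and only if it induces an isomorphism on $H^*(-;\F_p)$.

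Unwinding the definition in diagram (\ref{aGdiagram}), $a_G$ is the $p$-completion of
\[
\Sinfp B\cF_G \xrightarrow{t} \Sinfp BS \xrightarrow{{}_{S}G_{G}} \Sinfp BG.
\]
Applying $H^*(-;\F_p)$ and using contravariance gives
\[
H^*(BG;\F_p) \xrightarrow{\,\text{res}\,} H^*(BS;\F_p) \xrightarrow{\,t^*\,} H^*(B\cF_G;\F_p),
\]
where the first map is restriction along $S\hookrightarrow G$ (since ${}_{S}G_{G}$ corresponds to the stable map induced by the inclusion) and the second is the projection onto the summand $H^*(B\cF_G;\F_p) = \omega_{\cF_G}\cdot H^*(BS;\F_p)$ of $\cF_G$-stable elements, as noted in the discussion following the definition of $\Sinfp B\cF$.

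By the Cartan-Eilenberg stable-elements theorem \cite[XII.10.1]{CartanEilenberg}, restriction identifies $H^*(BG;\F_p)$ with the subring of $\cF_G$-stable elements in $H^*(BS;\F_p)$. In particular the image of $\text{res}$ already lies in $H^*(B\cF_G;\F_p)$, and $t^*$ acts as the identity there, so the displayed composite is the Cartan-Eilenberg isomorphism. Thus $a_G$ is a mod-$p$ cohomology equivalence and hence an equivalence of $p$-complete connective spectra.

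The only real obstacle is confirming that the action of the idempotent $\omega_{\cF_G}^*$ on $H^*(BS;\F_p)$ agrees with the classical projection onto $\cF_G$-stable elements; this is built into Ragnarsson's construction \cite{Ragnarsson}*{Section 4} of the characteristic idempotent, and once this is in hand the argument is immediate. An alternative proof route would be to construct the explicit inverse $b_G = \tfrac{1}{[G:S]}\cdot(r\circ{}_{G}G_{S})^{\wedge}_{p}$ (which makes sense because $[G:S]\in\Z_p^{\times}$) and verify both composites with $a_G$ are the identity by substituting the identification $\omega_{\cF_G} = \tfrac{1}{[G:S]}\cdot {}_{S}G_{S}$ in $\AF(S,S)$; this avoids cohomology but requires the same underlying input.
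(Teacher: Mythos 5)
Your main argument is correct and unpacks exactly what the paper's two citations supply: Cartan--Eilenberg identifies the image of restriction $H^*(BG;\F_p)\to H^*(BS;\F_p)$ with the $\cF_G$-stable elements, Ragnarsson's construction identifies $H^*(B\cF_G;\F_p)=\omega^*_{\cF_G}\cdot H^*(BS;\F_p)$ with that same subring, and the composite is therefore an isomorphism; since both spectra are $H\F_p$-local (connective and $p$-complete) with degreewise finite $\F_p$-homology, a mod-$p$ cohomology isomorphism is an equivalence. The paper offers no proof beyond the citations, and this is the intended argument.

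The sketched alternative, however, contains a genuine error. The claimed identification $\omega_{\cF_G}=\tfrac{1}{[G:S]}\cdot {}_{S}G_{S}$ in $\AF(S,S)$ is false whenever $S$ is not normal in $G$, so the proposed explicit inverse $b_G=\tfrac{1}{[G:S]}\cdot(r\circ {}_{G}G_{S})^{\wedge}_p$ is not the inverse of $a_G$. One way to see this: $\tfrac{1}{[G:S]}\cdot {}_{S}G_{S}$ is not even idempotent in general, since by the double coset formula $({}_S G_S)\times_S({}_S G_S)={}_S(G\times_S G)_S$ contains transitive $(S,S)$-summands with point stabilizers $S\cap{}^g S$ for $g$ running over $S\backslash G/S$, and these are proper subgroups of $S$ when $S$ is not normal, whereas $[G:S]\cdot{}_S G_S$ has all point stabilizers equal to subgroups appearing in ${}_S G_S$ alone. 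Equivalently, under the ring isomorphism $A^{\text{char}}_p(\cF_G)\cong A(\cF_G)^{\wedge}_p$, the idempotent $\omega_{\cF_G}$ maps to the one-point $S$-set while $\tfrac{1}{[G:S]}\cdot{}_S G_S$ maps to $\tfrac{1}{[G:S]}\cdot G/S$, and these agree only when $S$ acts trivially on $G/S$. The correct inverse is $({}_{\cF_G}G_{\cF_G})^{-1}$, whose explicit forms (a geometric series and a limit of $p$-power iterates) are given in Proposition~\ref{inverseFormula} and are genuinely more involved than a scalar multiple of $\omega_{\cF_G}$. So drop the alternative route, or replace it with the paper's construction of $b_G$.
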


\begin{proof}[Proof] We provide a proof for completeness.
By \cite{CartanEilenberg}*{XII.10.1} the inclusion of subgroups ${}_S G_G \colon \Sinfp BS \rightarrow \Sinfp BG$ induces an inclusion in mod-$p$ cohomology $H^*(BG;\F_p) \to H^*(BS;\F_p)$ with image the $\cF_G$-stable elements of $H^*(BS;\F_p)$.

Simultaneously $\Sinfp B\cF_G$ is constructed as the image in $\Sinfp BS$ when applying the idempotent $\omega_\cF$, and by \cite{BLO2}*{Proposition 5.5} in cohomology the idempotent $\omega_\cF$ induces a projection of $H^*(BS;\F_p)$ onto the subring of $\cF$-stable elements.

Consequently, the map $f\colon \Sinfp B\cF \to \Sinfp BS\to \Sinfp BG$ first includes $H^*(BG;\F_p)$ as the $\cF$-stable elements of $H^*(BS;\F_p)$ which is then projected, by the identity on $\cF$-stable elements, onto $H^*(\Sinfp B\cF;\F_p)$. Hence $f$ is a mod-$p$ equivalence, and the $p$-completion of $f$, $a_G$, is an equivalence.
\end{proof}

Note that $a_G$ is canonical and natural in maps of finite groups. We will use this equivalence to identify these spectra with each other.

Let $S \subset G$ be a Sylow $p$-subgroup and define
\[
J_G = \ker(A(G) \rightarrow A(S)).
\]
Note that $J_G$ is independent of the choice of Sylow $p$-subgroup. We provide a purely algebraic proof of the next folklore proposition inspired by the proof of \cite[Lemma 5]{MayMcClure}. It is possible to give a shorter proof by making use of the Segal conjecture following the lines of \cite[Proposition 9.7]{Strickland}, but we find the algebraic proof quite satisfying.

\begin{proposition}\label{propIsoOfCompletions}
Let $G$ be a finite group and let $J_G \subset A(G)$ be the ideal defined above. The are isomorphisms
\[
A(G)^{\wedge}_{p+I_G} \cong \Z_p \otimes A(G)/J_G \cong A(\cF_G)^\wedge_p
\]
natural in $G$.
\end{proposition}

\begin{proof}
Write the order of $G$ as $\abs{G}=p^k\cdot u$ where $u$ is the index of $S$ in $G$ and coprime to $p$.
We many times throughout the proof make use of the classical B{\'e}zout identity that we can write $1$ as an integral linear combination
\begin{equation}\label{eqBezoutFormula}
1 = a\cdot u + m\cdot p.
\end{equation}
For the first part of the proof, we show $A(G)^{\wedge}_{p+I_G} \cong \Z_p \otimes A(G)/J_G$ by constructing canonical maps in both directions.

For a virtual $G$-set $X\in A(G)$, the product $G/S \times X$ is isomorphic to $G\times_S X$ which takes the restriction of $X$ to $S$ and induces back up to $G$. Since $J_G=\ker(A(G)\xrightarrow{} A(S))$, we therefore conclude that
\[(G/S) J_G = 0.\]
The virtual $G$-set $u\cdot(G/G) - (G/S)$ is an element of the augmentation ideal $I_G$, and by the preceding calculation we have
\[\bigl(u\cdot(G/G) - (G/S) \bigr) J_G = u J_G.\]
This combined with \eqref{eqBezoutFormula} easily proves that $J_G\subseteq ((p)+I_G)J_G$:
\begin{multline*}
J_G = (mp+au)J_G \subseteq pJ_G+ uJ_G = pJ_G + \bigl(u(G/G) - (G/S)\bigr)J_G \\\subseteq pJ_G + I_GJ_G = ((p)+I_G)J_G.
\end{multline*}
By iterating the equation above, it follows that $J_G$ is contained in all powers of $(p)+I_G$, so $J_G$ is contained in the kernel of the completion map $A(G)\to A(G)^\wedge_{p+I_G}$.

Consequently we have a well-defined map
\[A(G)/J_G \to A(G)^\wedge_{p+I_G},\]
and since $A(G)^\wedge_{p+I_G}$ is in particular $p$-complete, the map extends to the $p$-completion $\Z_p\otimes (A(G)/J_G)$.

The map in the other direction requires a little more work. Recall that $A(G)$ embeds into a product ring (the so-called ghost ring $\Omega_G$):
\[
\Phi\colon A(G) \to \prod_{\substack{H\leq G \\ \text{up to $G$-conj.}}} \Z
\]
where the $H$-coordinate counts the $H$-fixed points, $\Phi_H(X) = \abs{X^H}$. The cokernel of $\Phi$ is finite, isomorphic to $\Omega_G/\Phi(A(G)) \cong \prod_H \Z/\abs{N_G H / H}\Z$, hence the ghost ring $\Omega_G$ satisfies that
\[\abs{G}\cdot \Omega_G \subseteq \Phi(A(G)).\]
Let $I\Omega_G$ be the augmentation ideal of $\Omega_G$, consisting of all tuples where the coordinate at the trivial subgroup equals zero.

Consider the transitive $G$-set $G/S$ and its restriction ${}_S(G/S)\in A(S)$. For each subgroup $P\leq S$, the fixed points for $P$ satisfy $p\nmid \abs{(G/S)^P}$. It follows that $\Phi({}_S(G/S))$ is invertible in the $p$-localization $(\Omega_S)_{(p)}$ of the ghost ring.
Multiplication by $\Phi({}_S (G/S))$ gives an automorphism of $(\Omega_S)_{(p)}$ that takes $\Phi(A(S)_{(p)})$ to some subset of itself. Since the cokernel of $\Phi\colon A(S)_{(p)}\to (\Omega_S)_{(p)}$ is finite and $\Phi({}_S(G/S))$ takes the image to itself, multiplication by ${}_S(G/S)$ must be an automorphism of $A(S)_{(p)}$ as well.

The inverse ${}_S(G/S)^{-1} \in A(S)_{(p)}$ has coefficients in $\Z_{(p)}$ so there exists some positive integer $v$, not divisible by $p$, such that
\[v\cdot {}_S(G/S)^{-1} \in A(S).\]
Because ${}_S(G/S)$ is $\cF_G$-stable, the inverse ${}_S(G/S)^{-1}$ is $\cF$-stable as well.

We now define a virtual $G$-set $M$ by induction of the virtual $S$-set above:
\[M := G\times_S (v\cdot {}_S(G/S)^{-1})\in A(G).\]
All orbits in $M$ has $p$-group stabilizers, so $\abs{M^H}=0$ for all non-$p$-subgroups $H\leq G$. Furthermore, the restriction of $M$ back to $S$ becomes
\[{}_S M = {}_SG \times_S (v\cdot {}_S(G/S)^{-1}).\]
The biset ${}_S G_S$ decomposes into orbits according to the double cosets
\[{}_S G_S \cong \sum_{x\in S\backslash G/S} [S\cap xSx^{-1}, c_x]_S^S \text{ in $A(S,S)$}.\]
Because $v\cdot {}_S(G/S)^{-1}$ is $\cF_G$-stable, restricting along a conjugation map $c_x\colon S\cap xSx^{-1}\to S$ is isomorphic to just restricting along the inclusion $S\cap xSx^{-1}\hookrightarrow S$. Consequently, acting by the biset ${}_S G_S$ on $v\cdot {}_S(G/S)^{-1}$ is equivalent to just multiplying with the $S$-set
\[{}_S (G/S) \cong \sum_{x\in S\backslash G/S} S/(S\cap xSx^{-1}).\]
We therefore have
\[{}_S M =  {}_SG \times_S (v\cdot {}_S(G/S)^{-1}) \cong {}_S(G/S) \times \bigl(v\cdot {}_S(G/S)^{-1}\bigr) = v\cdot (S/S).\]
From $M$ we can now construct the element $(v\cdot (G/G) - M)\in J_G$ since the summands cancel each other on restriction to $S$.
For every $X\in A(G)$ we have
\[v\cdot X = (v\cdot (G/G) - M)\times X + M\times X.\]
The first summand $(v\cdot (G/G) - M)\times X$ lies in the ideal $J_G$, and the second summand $M\times X$ has only $p$-group stabilizers and trivial fixed points for all non-$p$-subgroups $H\leq G$.

Recall that the order of $S$ is $p^k$. We shall prove that $I_G^{k+1} \subseteq J_G + pI_G$ in analogy to Lemma \ref{lemmaIpadic}, and this will allow us to define a map $A(G)^\wedge_{p+I_G} \to \Z_p\otimes (A(G)/J_G)$.
Let $X$ be any element of $I_G$, then $M\times X$ still has augmentation $0$. The virtual $G$-set $M\times X$ has $\abs{(M\times X)^H}=0$ for non-$p$-subgroups $H\leq G$ and additionally $\abs{M\times X}=0$. For $p$-subgroups $P\leq G$, we always have $p\mid \abs{Y} - \abs{Y^P}$ for $Y\in A(G)$, so in particular $p\mid \abs{(M\times X)^P}$ for all non-trivial $p$-subgroups $P\leq G$. We conclude that $p$ divides all coordinates of $\Phi(M\times X)\in \Omega_G$. Hence we have $M\times X\in pI\Omega_G$, and for $I_G$ in general we can write
\[v\cdot \Phi(I_G) \subseteq \Phi\bigl((v\cdot (G/G)-M)I_G\bigr) + \Phi(M\cdot I_G) \subseteq \Phi(J_G) + (pI\Omega_G \cap \Phi(I_G)).\]
Recall the earlier B{\'e}zout formula \eqref{eqBezoutFormula} for $u$ and recall that $p^k\cdot u \Omega_G \subseteq \Phi(A(G))$, which also holds for the augmentation ideals. With these results in mind, we see that
\begin{align*}
v^{k+1} \Phi(I_G^{k+1}) &\subseteq \Phi(J_G) + (p^{k+1} I\Omega_G \cap \Phi(I_G))
\\ &\subseteq \Phi(J_G) + p^{k+1}\cdot u\cdot I\Omega_G + p \Phi(I_G)
\\ &\subseteq \Phi(J_G) + p \Phi(I_G) + p\Phi(I_G)
\\ &=  \Phi(J_G) + p \Phi(I_G).
\end{align*}
To get rid of the $v$'s, we can use another B{\'e}zout identity, $1= b\cdot v + n\cdot p$, and see that
\[I_G^{k+1} = (b\cdot v+ n\cdot p)^{k+1} I_G^{k+1} \subseteq v^{k+1} I_G^{k+1} + p I_G \subseteq J_G + p I_G + pI_G = J_G + p I_G.\]
From this formula we see that
\[((p)+I_G)^{k+1} \subseteq J_G + p A(G).\]
By taking repeated powers we conclude that there is a well-defined map
\[A(G)^\wedge_{p+I_G} \to \Z_p\otimes (A(G)/J_G).\]
The maps between $A(G)^\wedge_{p+I_G}$ and $\Z_p\otimes (A(G)/J_G)$ in both directions are given by taking sequences of representatives in $A(G)$ and taking the limit in the other ring, hence the two maps are inverse to each other, and $A(G)^\wedge_{p+I_G}\cong\Z_p\otimes (A(G)/J_G)$ as claimed.

The second isomorphism $\Z_p\otimes (A(G)/J_G)\cong A(\cF_G)^\wedge_p$ is easier to see. The ideal $J_G$ is the kernel of the restriction $A(G)\to A(S)$, hence $A(G)/J_G$ is isomorphic to the image of $A(G)\to A(S)$. The image of $A(G)\to A(S)$ is contained in the subring $A(\cF_G)$ of $\cF_G$-stable elements. By Proposition 4.12 of \cite{ReehIdempotent}, $p$-locally the restrictions of $G$-sets generate all of $A(\cF_G)_{(p)}$ with a $p$-local basis consisting of the elements 
\[\frac{{}_S (G/P)}{{}_S (G/S)} \in A(\cF_G)_{(p)},\]
for $P\leq S$ and where these basis elements only depend on $\cF_G$ instead of the entire group $G$. By tensoring with $\Z_p$ these fractions also form a $\Z_p$-basis for $A(\cF_G)^\wedge_p$, so $\Z_p\otimes (A(G)/J_G) \cong A(\cF_G)^\wedge_p$.
\end{proof} 

We draw the following folklore corollary:

\begin{corollary}
Let $G$ and $H$ be finite groups and let $J_G \subset A(G)$ be the ideal defined above. There are canonical isomorphisms
\[
\AG(G,H)^{\wedge}_{p+I_G} \cong \Z_p \otimes \AG(G,H)/J_G\AG(G,H) \cong \AF(\cF_G,\cF_H)
\]
natural in $G$ and $H$.
\end{corollary}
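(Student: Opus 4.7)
I would establish the right-hand isomorphism $\AG(G,H)^{\wedge}_{p+I_G}\cong \AF(\cF_G,\cF_H)$ first by chaining together the propositions already developed in the paper, and then deduce the left-hand isomorphism with $\Z_p\otimes \AG(G,H)/J_G\AG(G,H)$ from the completion--tensor identity together with the $H=e$ specialization of the first isomorphism.

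For the right-hand isomorphism, observe that $A(G)$ is a finitely generated free $\Z$-module (hence Noetherian) and $\AG(G,H)$ is a finitely generated $\Z$-module, hence finitely generated over $A(G)$. The standard fact that for a finitely generated module over a Noetherian ring, completion at a sum of ideals coincides with iterated completion gives $\AG(G,H)^{\wedge}_{p+I_G}\cong (\AG(G,H)^{\wedge}_{I_G})^{\wedge}_p$. Now chain Theorem~\ref{SegalConj}, Proposition~\ref{algebraic}, Proposition~\ref{aG}, and Proposition~\ref{aniso} to identify this successively with
\[
[\Sinfp BG,\Sinfp BH]^{\wedge}_p\cong [\Sinfpp BG,\Sinfpp BH]\cong [\Sinfpp B\cF_G,\Sinfpp B\cF_H]\cong \AF(\cF_G,\cF_H).
\]
All steps are natural in $G$ and $H$, yielding the right-hand isomorphism naturally.

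For the left-hand isomorphism, specialize the previous display to the trivial group $H=e$, giving $A(G)^{\wedge}_{p+I_G}\cong \AF(\cF_G,\cF_e)$. Unwinding definitions, $\AF(\cF_G,\cF_e)$ is the $\cF_G$-stable part of $A(S)^{\wedge}_p$, which coincides with $A(\cF_G)^{\wedge}_p$. Since the image of the restriction $A(G)\to A(S)$ is classically the $\cF_G$-stable subring, one has $A(\cF_G)\cong A(G)/J_G$, and finite generation over $\Z$ yields $A(\cF_G)^{\wedge}_p\cong \Z_p\otimes A(G)/J_G$. Applying the completion--tensor identity $M^{\wedge}_{\mathfrak{a}}\cong M\otimes_R R^{\wedge}_{\mathfrak{a}}$ (valid because $A(G)$ is Noetherian and $\AG(G,H)$ is finitely generated over $A(G)$) to $R=A(G)$, $M=\AG(G,H)$, $\mathfrak{a}=(p)+I_G$, one deduces
\[
\AG(G,H)^{\wedge}_{p+I_G}\cong \AG(G,H)\otimes_{A(G)} A(G)^{\wedge}_{p+I_G}\cong \Z_p\otimes \AG(G,H)/J_G\AG(G,H),
\]
again naturally in $G$ and $H$.

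The delicate point is the identification $A(G)^{\wedge}_{p+I_G}\cong A(\cF_G)^{\wedge}_p$, which relies both on the coincidence of the $p$-adic and $(p+I_{\cF_G})$-adic topologies on $A(\cF_G)$ (recorded just before the corollary, as a consequence of Lemma~\ref{lemmaIpadic} applied to $S$) and on the classical identification of the image of the restriction $A(G)\to A(S)$ with the $\cF_G$-stable subring. Everything else is a direct application of the preceding propositions and routine commutative algebra.
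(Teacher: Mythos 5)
Your proof of the right-hand isomorphism $\AG(G,H)^{\wedge}_{p+I_G}\cong \AF(\cF_G,\cF_H)$ is correct and is essentially the paper's own argument: both make the iterated-completion observation (implicitly in the paper), then chain Theorem~\ref{SegalConj}, Proposition~\ref{algebraic}, Proposition~\ref{aG}, and Proposition~\ref{aniso}.

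For the left-hand isomorphism, you take a genuinely different route from the paper. The paper treats $A(G)^{\wedge}_{p+I_G}\cong \Z_p\otimes A(G)/J_G$ as a known fact about the Burnside ring --- it is a consequence of the classical Dress/tom\ Dieck structure theory: the primitive idempotent of $\Z_{(p)}\otimes A(G)$ supported at the maximal ideal $(p)+I_G$ has kernel exactly $\Z_{(p)}\otimes J_G$, because $J_G$ consists precisely of the virtual $G$-sets with vanishing marks at all $p$-subgroups. The paper then gets the module statement by base change. You instead \emph{derive} $A(G)^{\wedge}_{p+I_G}\cong \Z_p\otimes A(G)/J_G$ by specializing the already-established right-hand isomorphism at $H=e$, identifying $\AF(\cF_G,\cF_e)$ with $A(\cF_G)^{\wedge}_p$, and then invoking the identification of the image of $\mathrm{Res}\colon A(G)\to A(S)$ with the $\cF_G$-stable subring $A(\cF_G)$. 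This works, and it is a nice observation that the Segal-conjecture machinery recovers the local structure of $A(G)$; but calling that identification ``classical'' is misleading. That the image of restriction equals the full ring of $\cF_G$-stable elements (integrally, not merely after $p$-localization) is a nontrivial theorem of Reeh (cf.\ \cite{ReehStableSets}); the containment $\mathrm{Im}(\mathrm{Res})\subseteq A(\cF_G)$ is elementary, but the reverse is not. You should cite that explicitly rather than wave at it. In short: the paper's proof of the first isomorphism relies only on classical Burnside-ring idempotent theory, whereas yours imports the heavier stable-elements theorem, so the paper's route is the more economical one; your route trades that for unity with the rest of the argument. Neither is wrong, but your ``classically'' understates the needed input.
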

\begin{proof}
Since $A(G)^{\wedge}_{p+I_G} \cong \Z_p \otimes A(G)/J_G$ by Proposition \ref{propIsoOfCompletions}, the first isomorphism follows from the fact that the completion is given by base change. Naturality of this isomorphism follows from the fact that the image of a Sylow $p$-subgroup under a group homomorphism is contained in a Sylow $p$-subgroup.

To see the other isomorphism, recall that Theorem \ref{SegalConj} gives an isomorphism
\[
\AG(G,H)^{\wedge}_{I_G} \lra{\cong} [\Sinfp BG, \Sinfp BH].
\]
In view of Proposition \ref{aG}, it suffices to show that the $p$-completion functor
\[
[\Sinfp BG, \Sinfp BH] \lra{} [\Sinfpp BG, \Sinfpp BH]
\]
is given algebraically by $p$-completion. This is Proposition \ref{algebraic}.
\end{proof}

\subsection{A formula for $p$-completion}

Now consider the map ${}_{S}G_{S} \colon \Sinfp BS \rightarrow \Sinfp BS$, which is the composite of the inclusion and transfer along $S \subseteq G$. This element is $\cF_G$-semicharacteristic; it is not $S$-semicharacteristic as it is the composite
\[
{}_{S}G_{S} = [S, \id_S]_{S}^{G} \times_G [S,i_S]_{G}^{S}
\]
and various conjugations with respect to elements of $G$ not in $S$ show up in the resulting sum.

\begin{lemma} \label{invertiblemap}
The element
\[
{}_{\cF_G}G_{\cF_G} \in \AF(\cF_G, \cF_G) \cong [\Sinfpp B\cF_G, \Sinfpp B\cF_G]
\]
is a unit.
\end{lemma}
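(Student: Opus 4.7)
The plan is to show that ${}_{\cF_G}G_{\cF_G}$ lies in the commutative subring $A_p^{\text{char}}(\cF_G) \subseteq \AF(\cF_G,\cF_G)$ and is a unit there. Since the multiplicative identity of $A_p^{\text{char}}(\cF_G)$ is the characteristic idempotent $\omega_{\cF_G}$, which is simultaneously the identity of the ambient ring $\AF(\cF_G,\cF_G)$, a multiplicative inverse produced inside the commutative subring automatically serves as a two-sided inverse in $\AF(\cF_G,\cF_G)$.

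First I would use the double coset decomposition $G = \bigsqcup_{g \in S \backslash G / S} SgS$ to express ${}_S G_S$ as a sum of transitive $(S,S)$-bisets, each of the form $[K_g,\, c_g]_S^S$ for some $K_g \leq S$ and a suitable conjugation $c_g$ by an element of $G$. Each $c_g$ is a morphism in $\cF_G$, so the equivalence defining the basis of $\AF(\cF_G,\cF_G)$ (taking $\alpha = c_g$ as an $\cF_G$-isomorphism of sources and $\beta$ the identity of $c_g(K_g)$) identifies the basis element $[K_g, c_g]_{\cF_G}^{\cF_G}$ with the semicharacteristic $[c_g(K_g),\, i]_{\cF_G}^{\cF_G}$. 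Summing, ${}_{\cF_G}G_{\cF_G}$ is a $\Z_p$-linear combination of semicharacteristic basis elements, hence belongs to $A_p^{\text{char}}(\cF_G)$.

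Next I would transport ${}_{\cF_G}G_{\cF_G}$ across the ring isomorphism $\epsilon \colon A_p^{\text{char}}(\cF_G) \lra{\cong} A(\cF_G)^{\wedge}_{p}$ from Theorem D of \cite{ReehIdempotent}. Its image is the $\cF_G$-stable left $S$-set $G/S$, whose augmentation is the index $[G:S]$. By Sylow this integer is coprime to $p$, hence a unit in $\Z_p$ and nonzero in the residue field $\F_p$. Since $A(\cF_G)^{\wedge}_{p}$ is a complete local ring with maximal ideal $(p)+I_{\cF_G}$ and residue field $\F_p$, an element is a unit precisely when its image modulo the maximal ideal is nonzero. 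Therefore $G/S$ is a unit in $A(\cF_G)^{\wedge}_{p}$, and transporting back ${}_{\cF_G}G_{\cF_G}$ is a unit in $A_p^{\text{char}}(\cF_G)$; by the opening paragraph it is then a unit in $\AF(\cF_G,\cF_G)$.

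The main obstacle is the first step: basis elements $[K,\phi]_{\cF_G}^{\cF_G}$ with $\phi$ non-injective are not semicharacteristic, so membership of ${}_{\cF_G}G_{\cF_G}$ in $A_p^{\text{char}}(\cF_G)$ must be verified rather than assumed. This works out cleanly because every morphism appearing in the Mackey decomposition of ${}_S G_S$ is a conjugation by an element of $G$, hence an injection and an $\cF_G$-morphism. Once membership in the commutative subring is in hand, the remainder of the argument reduces to the standard local ring observation that units are detected by the augmentation map to $\F_p$, combined with Sylow's theorem.
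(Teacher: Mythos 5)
Your proof is correct and follows essentially the same route as the paper's: reduce to the commutative subring $A_p^{\text{char}}(\cF_G)$, transport across the ring isomorphism with $A(\cF_G)^{\wedge}_p$ to the $\cF_G$-stable set $G/S$, and invoke the complete-local-ring criterion with the observation that $[G:S]$ is coprime to $p$. Where you go beyond the paper is in spelling out the double coset decomposition $G = \bigsqcup SgS$ to justify that ${}_{\cF_G}G_{\cF_G}$ is $\cF_G$-semicharacteristic; the paper asserts this in the paragraph preceding the lemma (remarking only that "various conjugations with respect to elements of $G$ not in $S$ show up in the resulting sum"), so your elaboration that each $[K_g, c_g]_{\cF_G}^{\cF_G}$ coincides with $[c_g(K_g), i]_{\cF_G}^{\cF_G}$ because $c_g$ is an $\cF_G$-isomorphism is a worthwhile fleshing out of a step the authors treated as evident. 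Your opening remark that a unit in $A_p^{\text{char}}(\cF_G)$ is automatically a unit in the ambient ring because the two share the identity $\omega_{\cF_G}$ is also a useful clarification of the paper's terse "but now this implies".
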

\begin{proof}
Since ${}_{\cF_G}G_{\cF_G}$ is $\cF_G$-semicharacteristic it is in the image of the inclusion
\[
i \colon A_{p}^{\text{char}}(\cF_G) \rightarrow \AF(\cF_G,\cF_G).
\]

Recall that the composite
\[
A_{p}^{\text{char}}(\cF_G) \rightarrow \AF(\cF_G,\cF_G) \rightarrow A(\cF_G)^{\wedge}_{p}
\]
is an isomorphism of commutative rings even though the second map is not a ring map. The image of ${}_{\cF_G}G_{\cF_G}$ in $A(\cF_G)^{\wedge}_{p}$ is $G/S$ viewed as an $\cF_G$-stable set. Since $|G/S|$ is coprime to $p$, this projects onto a unit in $\F_p$ under the canonical map
\[
A(\cF_G)^{\wedge}_{p} \rightarrow \Z_p \rightarrow \F_p.
\]
Since $A(\cF_G)^{\wedge}_{p}$ is complete local with maximal ideal $p+I_{\cF_G}$ (see Remark \ref{AFLocal}), $G/S$ is a unit, but now this implies that ${}_{\cF_G}G_{\cF_G}$ is a unit.
\end{proof}

Recall the equivalence of Proposition \ref{aG}, we now give an explicit description of the inverse to $a_G$. Consider the following diagram
\begin{equation} \label{inverse}
\xymatrix{\Sinfp BG \ar[r]^{{}_{G}G_{S}} \ar[d] & \Sinfp BS \ar[d] \ar[r]^{r} & \Sinfp B\cF_G \ar[d] \\ \Sinfpp BG  \ar[r] \ar@/_1pc/[rrd]_{b_G} & \Sinfpp BS \ar[r] & \Sinfpp B\cF_G \ar[d]_{\simeq}^{({}_{\cF_G}G_{\cF_G})^{-1}} & \\ && \Sinfpp B\cF_G.}
\end{equation}
The map ${}_{G}G_{S}$ is the transfer from $\Sinfp BG$ to $\Sinfp BS$. The second row is the $p$-completion of the first row. The map $({}_{\cF_G}G_{\cF_G})^{-1}$ exists by Lemma \ref{invertiblemap}, which depends on the fact that we are in the category of $p$-complete spectra. The map $b_G$ is the composite.

\begin{lemma}
The map
\[
b_G \colon \Sinfpp BG \rightarrow \Sinfpp B\cF_G.
\]
is the inverse to $a_G$.
\end{lemma}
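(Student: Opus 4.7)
The plan is to prove that $b_G$ is a left inverse of $a_G$; since Proposition \ref{aG} already tells us $a_G$ is an equivalence, a left inverse is automatically the two-sided inverse.

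First I would expand $b_G \circ a_G$ directly from the definitions in diagrams \eqref{aGdiagram} and \eqref{inverse}, obtaining
\[
b_G \circ a_G \;=\; ({}_{\cF_G}G_{\cF_G})^{-1} \circ r \circ ({}_G G_S) \circ ({}_S G_G) \circ t.
\]
The middle composite $({}_G G_S) \circ ({}_S G_G) \colon \Sinfpp BS \to \Sinfpp BG \to \Sinfpp BS$ corresponds under the Burnside isomorphism to the biset composition ${}_S G \times_G G_S = {}_S G_S$, the $(S,S)$-biset underlying $G$. Hence the expression simplifies to
\[
({}_{\cF_G}G_{\cF_G})^{-1} \circ \bigl( r \circ {}_S G_S \circ t \bigr).
\]

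The key step is to recognize that $r \circ {}_S G_S \circ t$ is exactly the map corresponding to ${}_{\cF_G}G_{\cF_G}$ in $[\Sinfpp B\cF_G, \Sinfpp B\cF_G]$ under the isomorphism of Proposition \ref{aniso}. This is precisely the instance of Proposition \ref{bigdiagram} where one takes $G=H$ and $X = {}_G G_G$ (the identity on $\Sinfp BG$): the prescribed composite that produces ${}_{\cF_G}X_{\cF_H}$ collapses, after using $X = \mathrm{id}$, to $r \circ {}_G G_S \circ {}_S G_G \circ t$. Equivalently, since ${}_S G_S$ is already bistable, postcomposing with $r$ and precomposing with $t$ extracts the corresponding $\cF_G$-stable element.

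Substituting gives $b_G \circ a_G = ({}_{\cF_G}G_{\cF_G})^{-1} \circ {}_{\cF_G}G_{\cF_G} = \id_{\Sinfpp B\cF_G}$, using Lemma \ref{invertiblemap} which supplies the inverse in the $p$-complete setting. By Proposition \ref{aG}, $a_G$ is an equivalence, so the left inverse $b_G$ must coincide with $a_G^{-1}$. The only real content is the identification of $r \circ {}_S G_S \circ t$ with ${}_{\cF_G}G_{\cF_G}$, which is a direct reading of Proposition \ref{bigdiagram} applied to the identity biset.
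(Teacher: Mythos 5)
Your proof is correct and follows essentially the same route as the paper: concatenate the two defining diagrams, recognize that the uncompleted composite $r\circ {}_S G_S \circ t$ is the image of ${}_GG_G$ under the map of Proposition \ref{bigdiagram} (i.e., ${}_{\cF_G}G_{\cF_G}$), and then cancel against $({}_{\cF_G}G_{\cF_G})^{-1}$. The one thing you add explicitly---that a one-sided inverse to the known equivalence $a_G$ is automatically two-sided---is left implicit in the paper.
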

\begin{proof}
Put Diagram \ref{aGdiagram} from page \pageref{aGdiagram} to the left of Diagram \ref{inverse}. Note that the image of ${}_{G}G_{G}$ along the map
\[
\AG(G,G) \rightarrow \AF(\cF_G,\cF_G)
\]
of Proposition \ref{bigdiagram} is ${}_{\cF_G}G_{\cF_G}$, which we then postcompose with $({}_{\cF_G}G_{\cF_G})^{-1}$.
\end{proof}

Using $b_G$, we may replace the target of the canonical map to the $p$-completion
\[
\Sinfp BG \lra{} \Sinfpp BG
\]
by $\Sinfpp B\cF_G$. Let
\[
c_G \colon \Sinfp BG \rightarrow \Sinfpp BG \lra{b_G} \Sinfpp B\cF_G
\]
be the composite; it is naturally equivalent to the $p$-completion map. Diagram \ref{inverse} gives a kind of formula for $c_G$. It is the the transfer ${}_{G}G_{S}$, viewed as a map landing in $\Sinfp B\cF_G$, postcomposed with the $p$-completion map $\Sinfp B\cF_G \rightarrow \Sinfpp B\cF_G$ followed by the equivalence $({}_{\cF_G}G_{\cF_G})^{-1}$:
\begin{equation}\label{cGformula}
c_G\colon \Sinfp BG \xrightarrow{{}_GG_{S}} \Sinfp BS \lra{r} \Sinfp B\cF_G \rightarrow \Sinfpp B\cF_G \xrightarrow{{({}_{\cF_G}G_{\cF_G})^{-1}}} \Sinfpp B\cF_G.
\end{equation}

Using the equivalences $a_G$ and $b_H$, we may construct a map
\[
\widehat{(-)} \colon [\Sinfp BG, \Sinfp BH] \rightarrow [\Sinfpp B\cF_G, \Sinfpp B\cF_H]
\]
by sending
\[
f \colon \Sinfp BG \rightarrow \Sinfp BH
\]
to the composite
\[
\widehat{f} \colon \Sinfpp B\cF_G \lra{a_G} \Sinfpp BG \lra{f^{\wedge}_{p}} \Sinfpp BH \lra{b_H} \Sinfpp B\cF_H.
\]
By Proposition \ref{aniso}, this is an element in $\AF(\cF_G,\cF_H)$. We will give a simple formula for $\widehat{f}$ when $f$ comes from a virtual $(G,H)$-biset. In fact, we may precompose $\widehat{(-)}$ with the canonical map $\AG(G,H) \rightarrow [\Sinfp BG, \Sinfp BH]$. By abuse of notation, we will also call this $\widehat{(-)}$.


\begin{theorem} \label{mainthm}
Let $G$ and $H$ be finite groups, and let $T \subset H$ be the Sylow $p$-subgroup on which $\cF_H$ is defined. The map
\[
\AG(G,H) \lra{\widehat{(-)}} \AF(\cF_G,\cF_H)
\]
sends a virtual $(G,H)$-biset ${}_{G}X_{H}$ to
\[
{}_{\cF_G}\widehat{X}_{\cF_H} = {}_{\cF_G}X \times_T H^{-1}_{\cF_H} =  ({}_{\cF_G}X_{\cF_H}) \times_{T} ({}_{\cF_H}H_{\cF_H})^{-1}.
\]
In other words, there is a commutative diagram in the stable homotopy category
\[
\xymatrix{\Sinfp BG \ar[rrr]^X \ar[d]_{c_G} & & & \Sinfp BH \ar[d]^{c_H} \\ \Sinfpp B\cF_G \ar[rrr]^{{}_{\cF_G}X \times_T H^{-1}_{\cF_H}} & & & \Sinfpp B\cF_H.}
\]
\end{theorem}
\begin{proof}
Putting together Diagram \ref{aGdiagram} and the definition of $c_H$ as $\Sinfp BH \to \Sinfpp BH \xrightarrow{b_H} \Sinfpp B\cF_H$, we have a commutative diagram
\[
\xymatrix{\Sinfp B\cF_G \ar[r] \ar[d] & \Sinfp BS \ar[r] & \Sinfp BG \ar[d] \ar[r]^{X} & \Sinfp BH \ar[dr]^{c_H} \ar[d] & \\ \Sinfpp B\cF_G \ar[rr]^{a_G} && \Sinfpp BG \ar[r]^{X^{\wedge}_p} & \Sinfpp BH \ar[r]^{b_H} & \Sinfpp B\cF_H.}
\]
The vertical arrows are all the canonical maps to the $p$-completion. Note that $\widehat{X}$ is the is the composite of the arrows in the bottom row of the diagram. Also, we could add $c_G \colon \Sinfp BG \to \Sinfpp BG \xrightarrow{a_G^{-1}} \Sinfpp B\cF_G$ diagonally in the left hand square and the diagram would still commute. Plug in the expression \eqref{cGformula} for $c_G$, and the composite along the top of the diagram is precisely
\[
{}_{\cF_G}X \times_T H^{-1}_{\cF_H}
\]
giving us the desired formula.
\end{proof}

\begin{remark} The formula of Theorem \ref{mainthm} also makes sense on elements in $\AG(G,H)^{\wedge}_{I_G}$ and the proof is identical once one feels comfortable referring to elements in the completion as virtual bisets.
\end{remark}

This result allows us to give explicit formulas for the $p$-completion functor. It is often useful to have formulas more explicit than $({}_{\cF_G}G_{\cF_G})^{-1}$. We will give two further ways of understanding this element. One as an infinite series and the other as a certain limit. The following formulas for calculating $({}_{\cF_G}G_{\cF_G})^{-1}$ are based on similar calculations in \cite{Ragnarsson}.


\begin{proposition}\label{inverseFormula}
Let $X = {}_{\cF_G}G_{\cF_G}$. Inside $\AF(\cF_G,\cF_G)$ we have the equalities
\[
X^{-1} = X^{p-2}\sum_{i \geq 0} (1-X^{p-1})^{i} = \lim_{n \to \infty} X^{(p-1)p^n-1}.
\]
\end{proposition}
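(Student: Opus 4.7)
The plan is to reduce everything to a computation in the complete local commutative ring $A(\cF_G)^{\wedge}_p$ and then carry out a standard manipulation of geometric series and $p^n$-th powers in that ring. Since $X = {}_{\cF_G}G_{\cF_G}$ is $\cF_G$-semicharacteristic, it lies in the image of the commutative subring $A_p^{\text{char}}(\cF_G) \subset \AF(\cF_G,\cF_G)$, and by Lemma \ref{invertiblemap} its inverse is also in this subring. Thus the entire computation happens inside $A_p^{\text{char}}(\cF_G)$, which under the isomorphism of the preliminaries is identified with $A(\cF_G)^{\wedge}_p$, a ring that is complete with respect to the maximal ideal $\m = (p) + I_{\cF_G}$ (and the $\m$-adic topology coincides with the $p$-adic topology by Lemma \ref{lemmaIpadic}).

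Next, I would use the augmentation $\epsilon\colon A(\cF_G)^{\wedge}_p \to \F_p$ induced by sending an $\cF_G$-stable set to its cardinality mod $p$. Under this map $X$ goes to $|G/S| \bmod p$, which is a unit in $\F_p$ because $|G/S|$ is coprime to $p$. By Fermat's little theorem, $\epsilon(X^{p-1}) = 1$, so the element $1 - X^{p-1}$ lies in $\m$. Therefore the geometric series $\sum_{i \geq 0}(1 - X^{p-1})^i$ converges $\m$-adically to $(1-(1-X^{p-1}))^{-1} = X^{-(p-1)}$. Multiplying on the left by $X^{p-2}$ gives the first identity $X^{-1} = X^{p-2}\sum_{i \geq 0}(1-X^{p-1})^i$.

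For the second identity, I would write $X^{p-1} = 1 + y$ with $y \in \m$ and show by induction on $n$ that $(1+y)^{p^n} \equiv 1 \pmod{\m^{n+1}}$. The inductive step expands $(1+y^{(n)})^p$ via the binomial theorem where $y^{(n)} \in \m^{n+1}$: the term $(y^{(n)})^p$ lies in $\m^{p(n+1)}$, the middle terms $\binom{p}{k}(y^{(n)})^k$ for $1 \leq k \leq p-1$ pick up a factor of $p \in \m$ placing them in $\m^{k(n+1)+1} \subseteq \m^{n+2}$, and the linear term $p y^{(n)}$ lies in $\m^{n+2}$. Hence $X^{(p-1)p^n} = (1+y)^{p^n} \to 1$ in the $\m$-adic topology, so $X^{(p-1)p^n - 1} = X^{-1} \cdot X^{(p-1)p^n} \to X^{-1}$ as $n \to \infty$.

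There is no real obstacle here beyond bookkeeping; the only subtle point is justifying that inversion, multiplication, and limits may all be taken inside the commutative completion $A(\cF_G)^{\wedge}_p$ rather than in the (non-commutative) composition ring $\AF(\cF_G,\cF_G)$. This is handled by observing that $A_p^{\text{char}}(\cF_G)$ is a subring containing $X$ and $X^{-1}$, closed under the relevant operations, and that composition in $\AF(\cF_G,\cF_G)$ restricts to its commutative ring multiplication on this subring.
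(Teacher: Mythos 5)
Your proof is correct and follows essentially the same route as the paper: reduce to the complete local commutative ring $A(\cF_G)^{\wedge}_p$ via the identification with $A_p^{\text{char}}(\cF_G)$, note $X^{p-1}\equiv 1 \pmod{\m}$, and show by induction that $(1+y)^{p^n}\equiv 1 \pmod{\m^{n+1}}$ for $y\in\m$. The only cosmetic difference is that the paper verifies the inductive step via the factorization $Y^{p^{n+1}}-1 = (Y^{p^n}-1)\sum_{i=0}^{p-1} Y^{p^n i}$ while you expand $(1+y^{(n)})^p$ by the binomial theorem; both boil down to the same divisibility by $p$.
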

\begin{proof}
Since $X$ is $\cF_G$-characteristic, it suffices to prove this inside
\[
A(\cF_G)^{\wedge}_{p} \cong A^{\text{char}}_{p}(\cF_G) \subset \AF(\cF_G,\cF_G).
\]
This ring is complete local with maximal ideal $m = p + I_{\cF_G}$ (see Remark \ref{AFLocal}).

Since $X$ is invertible, it is enough to show that
\[
\lim_{n \to \infty} X^{(p-1)p^n}= X \times \lim_{n \to \infty} X^{(p-1)p^n-1} = 1.
\]
But since $A(\cF_G)^{\wedge}_{p}/m \cong \F_p$, we have that $X^{p-1} = 1 \mod m$. Thus it is enough to show that if $Y=1 \mod m$, then $\lim\limits_{n \to \infty} Y^{p^n}=1$.

Indeed, we will prove by induction that
\[
Y^{p^n}-1 \in m^{n+1}.
\]
The case $n=0$ follows by assumption. Assume that $Y^{p^n}-1 \in m^{n+1}$. We have
\[
Y^{p^{n+1}}-1  = (Y^{p^n}-1)(\sum^{p-1}_{i=0} Y^{p^ni}),
\]
but
\[
\sum^{p-1}_{i=0} Y^{p^ni} = \sum^{p-1}_{i=0} 1^{p^ni} =p =0 \mod m
\]
and $(Y^{p^n}-1)\in m^{n+1}$ by assumption, so $(Y^{p^{n+1}}-1)\in m^{n+2}$.

For the other equality, note that the sum is the geometric series for $1/X^{p-1}$ and that the summand live in higher and higher powers of $m$.
\end{proof}

\begin{remark}
The formulas for $X^{-1}$ in the previous proposition are true much more generally. For instance, it suffices that $R$ is a (not necessarily commutative) $\Z_p$-algebra with a two-sided maximal ideal $m$ such that $R/m \cong \F_p$ and $R$ is finitely generated as a $\Z_p$-module.
\end{remark}

\begin{corollary}\label{splittingIdempotent}
The idempotent in $\KG(G,G)^\wedge_{I_G}$ that splits off $(\Sinf BG)^\wedge_p$ as a summand of $\Sinf BG$ can be written as
\[
\lim_{n\to\infty} ([S,i_S]_G^G - [S,0]_G^G)^{(p-1)p^n}.
\]
The biset $[S,i_S]_G^G - [S,0]_G^G= (G\times_S G) - (G/S \times_{e} G)$ is just the transfer from $\Sinf BG$ to $\Sinf BS$ followed by the inclusion back to $\Sinf BG$.
\end{corollary}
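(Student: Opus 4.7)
The plan is to reduce the claim to Proposition \ref{inverseFormula} by restricting to the $p$-complete summand of $\Sinf BG$. Set $R=\KG(G,G)^\wedge_{I_G}\cong [\Sinf BG,\Sinf BG]$ and $X=[S,i_S]_G^G-[S,0]_G^G$. The corollary's final sentence identifies $X$, via the Segal conjecture, with the stable endomorphism $\Sinf BG\xrightarrow{\text{Tr}}\Sinf BS\hookrightarrow \Sinf BG$: on the pointed side $[S,i_S]_G^G=G\times_S G$ represents transfer followed by inclusion, while $[S,0]_G^G$ represents transfer composed with the trivial (basepoint) inclusion, so their difference restricts to transfer-then-inclusion on the reduced summand $\Sinf BG$.

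The key structural step is to show $X$ lies in the corner $e_p R e_p$, where $e_p\in R$ is the splitting idempotent onto $(\Sinf BG)^\wedge_p$. Using $\Sinf BG\simeq \bigvee_l (\Sinf BG)^\wedge_l$ and the $p$-completeness of $\Sinf BS$ (since $S$ is a $p$-group), one checks for each $l\neq p$ that $[\Sinf BS,(\Sinf BG)^\wedge_l]=0$ and $[(\Sinf BG)^\wedge_l,\Sinf BS]=0$: the first because $\Sinf BS$ is $H\F_l$-acyclic (reduced $\F_l$-homology of $BS$ vanishes since $l$ is coprime to $|S|$) while $(\Sinf BG)^\wedge_l$ is $H\F_l$-local; the second because any map to $\Sinf BS=(\Sinf BS)^\wedge_p$ factors through $p$-completion, and the $p$-completion of the finite-type $l$-complete spectrum $(\Sinf BG)^\wedge_l$ is contractible when $l\neq p$. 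These vanishings give $X=e_p X e_p$, and inductively $X^k=\iota_p X_p^k\pi_p$ for every $k\geq 1$, where $X_p$ is the induced endomorphism on $(\Sinf BG)^\wedge_p$.

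Under the equivalence $(\Sinf BG)^\wedge_p\simeq \Sinf B\cF_G$ from Proposition \ref{aG} (unpointed summand), $X_p$ corresponds to the unpointed component of ${}_{\cF_G}G_{\cF_G}\in \AF(\cF_G,\cF_G)$. Since $X_p$ is $\cF_G$-semicharacteristic, it sits in the subring $A_p^{\text{char}}(\cF_G)\cong A(\cF_G)^\wedge_p$, which is a complete local $\Z_p$-algebra with maximal ideal $(p)+I_{\cF_G}$ and residue field $\F_p$; by Lemma \ref{invertiblemap} (using that $|G/S|$ is coprime to $p$), $X_p$ is a unit there. Proposition \ref{inverseFormula} applied to $X_p$ then gives $\lim_n X_p^{(p-1)p^n}=\id$, and combining with $X^k=\iota_p X_p^k\pi_p$ yields $\lim_n X^{(p-1)p^n}=\iota_p\pi_p=e_p$, the splitting idempotent. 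The main obstacle is the middle step: verifying via the $H\F_l$-(co)acyclicity arguments that $X$ is confined to the $p$-complete corner, so the calculation moves cleanly into $A(\cF_G)^\wedge_p$ where Proposition \ref{inverseFormula} applies.
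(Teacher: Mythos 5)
Your argument is correct and takes a genuinely different route from the paper's. The paper writes down an explicit retraction $\overline c_G\colon \Sinf BG\to \Sinf B\cF_G$ and section $s\colon \Sinf B\cF_G\to \Sinf BG$ (the unpointed versions of $c_G$ and of transfer-then-inclusion), verifies by biset manipulation that $\overline c_G\circ s$ is the identity, concludes that $s\circ \overline c_G$ is the splitting idempotent, and then expands it by substituting the limit formula of Proposition~\ref{inverseFormula} for the factor $({}_{\cF_G}G_{\cF_G})^{-1}$. You instead work with the endomorphism $X$ itself: the $H\F_l$-(co)acyclicity arguments show that $X$ is confined to the corner $e_p R e_p$, so that $X^k = \iota_p X_p^k \pi_p$ for all $k$, and then you feed $X_p$ into Proposition~\ref{inverseFormula} to get $\lim_n X_p^{(p-1)p^n} = \id$ and hence $\lim_n X^{(p-1)p^n} = \iota_p\pi_p = e_p$. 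Your localization step replaces the paper's verification that $\overline c_G\circ s = \id$; it is more conceptual, explaining directly why the limit should land on an idempotent with the correct image, at the cost of importing $H\F_l$-localization arguments rather than staying inside biset combinatorics. Both approaches ultimately rest on the same engine, Proposition~\ref{inverseFormula}.

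One step deserves a more explicit justification. The identification of $X_p$ with the unpointed component of ${}_{\cF_G}G_{\cF_G}$ under the equivalence of Proposition~\ref{aG} is not the naive restriction to the Sylow subgroup: the restriction ${}_{\cF_G}X_{\cF_G}$ of $X = G\times_S G$ is $({}_{\cF_G}G_{\cF_G})^2$, and it is the extra factor $({}_{\cF_G}G_{\cF_G})^{-1}$ built into $b_G$ (equivalently, Proposition~\ref{mainprop} applied with $H=G$) that brings this down to a single ${}_{\cF_G}G_{\cF_G}$. Your argument happens to survive either reading, since you only use that $X_p$ lies in $A_p^{\text{char}}(\cF_G)$ and is a unit there---which is also true of $({}_{\cF_G}G_{\cF_G})^2$---but the claim as stated merits a sentence of proof.
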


\begin{proof}
Recall from Section \ref{remarkDisjointBasepoints} that the idempotent $[H,i_H] - [H,0]\in \AG(H,H)$ splits off $\Sinf BH$ as a summand of $\Sinfp BH$. Furthermore, for any virtual $(G,H)$-biset $X$, we have
\begin{equation}\label{eqRightsideIdempotent}
([G,i_G]-[G,0])\times_G X \times_H ([H,i_H]-[H,0]) = X \times_H ([H,i_H]-[H,0]).
\end{equation}
Hence in order to get the unpointed part $\Sinf BG\to \Sinf BH$ of any map $\Sinfp BG\to \Sinfp BH$, we just have to postcompose with $[H,i_H] - [H,0]\in \AG(H,H)$.

For a saturated fusion system $\cF$, the characteristic idempotent $\omega_\cF$, which splits $\Sinfpp BF$ from $\Sinfpp BS$, acts as the identity on the $(S^0)^\wedge_p$-summand since $\abs{\omega_\cF}=1$. Splitting off $\Sinf B\cF$ from $\Sinfpp B\cF$ corresponds to the idempotent
$\omega_\cF-[S,0]_S^S = \omega_\cF \times_S ([S,i_S]_S^S - [S,0]_S^S)\in \AF(\cF,\cF)$.

The map $c_G\colon \Sinfp BG\to \Sinfpp B\cF_G$ has an unpointed part $\overline c_G\colon \Sinf BG\to \Sinf B\cF_G$ which we get by postcomposing with the idempotent $\omega_{\cF_G}-[S,0]_S^S\in \AF(\cF,\cF)$. The map $\overline c_G$ is represented by the composition
\[
\begin{tikzpicture}
  \matrix (M) [matrix of math nodes] {
    \Sinfp BG &[1cm] \Sinfp BS &[1cm] \Sinfpp B\cF_G &[3cm] \Sinfpp B\cF_G \\[1cm]
    \Sinf BG &&& \Sinf B\cF_G \\
  };
  \path[auto,arrow,->]
    (M-1-1) edge node{${}_GG_{S}$} (M-1-2)
    (M-1-2) edge node{$r$} (M-1-3)
    (M-1-3) edge node{${({}_{\cF_G}G_{\cF_G})^{-1}}$} (M-1-4)
    (M-1-4) edge (M-2-4)
    (M-2-1) edge (M-1-1)
            edge node{${}_GG_{S} \times_S ({}_{\cF_G}G_{\cF_G})^{-1} \times_S (\omega_{\cF_G} - [S,0])$} (M-2-4)
  ;
\end{tikzpicture}
\]

Similarly the map $s\colon \Sinf B\cF_G \xrightarrow{t} \Sinf BS \xrightarrow{i_S} \Sinf BG$ is represented by the virtual biset ${}_S G_G \times_G ([G,i_G] - [G,0])$.

We see that $s$ is a section to $\overline c_G$ since $\overline c_G\circ s$ is represented by the composite
\begin{align*}
& {}_S G_G \times_G ([G,i_G] - [G,0]) \times_G {}_GG_{S} \times_S ({}_{\cF_G}G_{\cF_G})^{-1} \times_S (\omega_{\cF_G} - [S,0])
\\ ={}& {}_S G_G \times_G {}_GG_{S} \times_S ({}_{\cF_G}G_{\cF_G})^{-1} \times_S (\omega_{\cF_G} - [S,0])
\\ ={}& {}_S G_{S} \times_S ({}_{\cF_G}G_{\cF_G})^{-1} \times_S (\omega_{\cF_G} - [S,0])
\\ ={}& \omega_{\cF_G}\times_S (\omega_{\cF_G} - [S,0]) = (\omega_{\cF_G} - [S,0])
\end{align*}
and $\omega_{\cF_G} - [S,0]$ is the identity map on $\Sinf B\cF_G$. Note that we can leave out $([G,i_G] - [G,0])$ in the middle since we multiply by $(\omega_{\cF_G} - [S,0])$ at the end anyway.

The composition $\overline c_G\circ s$ ends with the equivalence $\overline b_G\colon (\Sinf BG)^\wedge_p \to \Sinf B\cF_G$. If we instead place $\overline b_G$ at the beginning, we see that
\[
(\Sinf BG^\wedge_p) \xrightarrow{b_G} (\Sinf B\cF_G) \xrightarrow{s} \Sinf BG \to (\Sinf BG)^\wedge_p
\]
is also the identity.

From this we conclude that $s\circ \overline c_G\colon \Sinf BG \to \Sinf BG$ is an idempotent whose image is equivalent to the $p$-completion $(\Sinf BG)^\wedge_p$. We now plug in the limit formula for $({}_{\cF_G}G_{\cF_G})^{-1}$ from Proposition \ref{inverseFormula} and see that the idempotent $s\circ \overline c_G$ has the form
\begin{align*}
& {}_GG_{S} \times_S ({}_{\cF_G}G_{\cF_G})^{-1} \times_S (\omega_{\cF_G} - [S,0]) \times_S {}_S G_G \times_G ([G,i_G] - [G,0])
\\ ={}& {}_GG_{S} \times_S ({}_{\cF_G}G_{\cF_G})^{-1} \times_S {}_S G_G \times_G ([G,i_G] - [G,0])
\\ ={}& {}_GG_{S} \times_S \Bigl(\lim_{n\to\infty} ({}_S G_S)^{(p-1)p^n-1} \Bigr) \times_S {}_S G_G \times_G ([G,i_G] - [G,0])
\end{align*}
Each factor ${}_S G_S$ in the limit of powers can be decomposed as $({}_S G_G) \times_G ({}_G G_S)$. Note that we have an additional ${}_G G_S$ in front of the limit and ${}_S G_G$ after the limit. We pull in the additional factors and make powers of $({}_G G_S)\times_S ({}_S G_G)=G\times_S G$ instead -- with the exponent increased by $1$:
\begin{align*}
& {}_GG_{S} \times_S \Bigl(\lim_{n\to\infty} ({}_S G_S)^{(p-1)p^n-1} \Bigr) \times_S {}_S G_G \times_G ([G,i_G] - [G,0])
\\ ={}& \Bigl(\lim_{n\to\infty} (G \times_S G)^{(p-1)p^n} \Bigr) \times_G ([G,i_G] - [G,0])
\\ ={}& \lim_{n\to\infty} (G \times_S G - [S,0]_G^G)^{(p-1)p^n}.
\end{align*}
The last equality holds because \eqref{eqRightsideIdempotent} tells us that we can act with the idempotent $[G,i_G] - [G,0]$  on every factor in a long composition, and $(G \times_S G)\times_G ([G,i_G] - [G,0]) = G \times_S G - [S,0]_G^G$.
\end{proof}

\begin{example}
As an example of how the different formulas of this paper play together with the $I_G$-adic topology, we will perform a ``sanity check''. We will check that the idempotents of Corollary \ref{splittingIdempotent} at each prime actually add up to give back the identity on $\Sinf BG$.

Let $S_p$ be a Sylow $p$-subgroup of $G$, and let $\omega_p$ denote the idempotent
\[
\omega_p:= \lim_{n\to\infty} ([S_p,i_{S_p}]_G^G - [S_p,0]_G^G)^{(p-1)p^n}
\]
that splits off $(\Sinf BG)^\wedge_p$ from $\Sinf BG$. We will confirm that
\[ [G,i_G] - [G,0] = \sum_p \omega_p\] in the endomorphism ring $\KG(G,G)^\wedge_{I_G}$ of $\Sinf BG$.

First note that we may write
\begin{equation} \label{equation}
1 = \sum_p a_p \frac{\lvert G\rvert}{\lvert S_p\rvert},
\end{equation}
a linear combination of integers for some choice of integers $a_p$. Next let
\begin{align*}
Z :=& \sum_p a_p \Bigl( \frac{\lvert G\rvert}{\lvert S_p\rvert} ([G,i_G] - [G,0]) - ([S_p,i_{S_p}]-[S_p,0]) \Bigr)
\\ ={}& \sum_p a_p \Bigl( \frac{\lvert G\rvert}{\lvert S_p\rvert} [G,i_G] - [S_p,i_{S_p}] \Bigr)\times_G ([G,i_g]-[G,0]),
\end{align*}
which is an element of $I_G\cdot([G,i_G]-[G,0])$.

We now claim that
\begin{equation}\label{eqProductInAugmentationIdeal}
Z \times_G ( ([G,i_G]-[G,0]) - \sum_p \omega_p ) = ([G,i_G]-[G,0]) - \sum_p \omega_p.
\end{equation}
To show this we need the following two calculations:
The first is the fact that
\[([S_p,i_{S_p}] - [S_p,0]) \times_G ([S_q,i_{S_q}] - [S_q,0]) = 0\text{ whenever $p \neq q$.}\]
This is because the double coset formula for the composition of these bisets contains only subgroups of the form $(S_p)^g\cap S_q=1$ for elements $g\in G$, and contributions from $[S_q,i_{S_q}]$ and $[S_q,0]$ cancel each other when restricted to the trivial subgroup.

Consequently, $([S_p,i_{S_p}] - [S_p,0]) \times_G \omega_q = 0$ as $\omega_q$ is formed by iterating $[S_q,i_{S_q}] - [S_q,0]$.

The second calculation we need is that
\[([S_p,i_{S_p}] - [S_p,0] ) \times_G \omega_p = [S_p,i_{S_p}] - [S_p,0],\]
which we can prove by reversing the last step in the proof of Corollary \ref{splittingIdempotent}: First off the Corollary gives us a formula for $\omega_p$ as a limit of powers.
\begin{align*}
&([S_p,i_{S_p}] - [S_p,0] ) \times_G \omega_p
\\ ={}& \lim_{n \to \infty} ( [S_p, i_{S_p}] - [S_p,0] ) \times_G ( [S_p, i_{S_p}] - [S_p,0] )^{ (p-1)p^n }
\end{align*}
We have $[S_p,i_{S_p}]-[S_p,0] = [S_p,i_{S_p}]\times_G ([G,i_G]-[G,0])$, and by \eqref{eqRightsideIdempotent} we can push the idempotent $[G,i_G]-[G,0]$ all the way to the end of a long product to get
\begin{align*}
&([S_p,i_{S_p}] - [S_p,0] ) \times_G \omega_p
\\ ={}& \Bigl(\lim_{n \to \infty} ( [S_p, i_{S_p}]_G^G )^{ (p-1)p^n+1 } \Bigr) \times_G ([G,i_G]-[G,0])
\end{align*}
Next we write $[S_p, i_{S_p}]_G^G = ({}_G G_{S_p})\times_{S_p} ({}_{S_p} G_G)$. We can then pull out the initial $({}_G G_{S_p})$ and the final $({}_{S_p} G_G)$, and combine the remain factors in pairs $({}_{S_p} G_G) \times_G ({}_G G_{S_p}) = {}_{S_p} G_{S_p}$. This leads us to
\begin{align*}
&([S_p,i_{S_p}] - [S_p,0] ) \times_G \omega_p
\\ ={}& ({}_G G_{S_p})\times_{S_p}\Bigl(\lim_{n \to \infty} ( {}_{S_p}G_{S_p} )^{ (p-1)p^n } \Bigr)\times_{S_p} ({}_{S_p} G_G) \times_G ([G,i_G]-[G,0])
\\ ={}& ({}_G G_{S_p}) \times_{S_p} (\omega_{\cF_{S_p}(G)}) \times_{S_p} ({}_{S_p} G_G) \times_G ([G,i_G]-[G,0])
\\ ={}& ({}_G G_{S_p}) \times_{S_p} ({}_{S_p} G_G) \times_G ([G,i_G]-[G,0])             \quad\text{ by $\cF_{S_p}(G)$-stability}
\\ ={}& [S_p,i_{S_p}]\times_G ([G,i_G]-[G,0])
\\ ={}& [S_p, i_{S_p}] - [S_p,0].
\end{align*}
This implies that $[S_p, i_{S_p}]_G^G - [S_p,0]_G^G$ is $\omega_p$-stable (not surprisingly).

Now we return to proving Equation \eqref{eqProductInAugmentationIdeal}:
\begin{align*}
&Z \times_G \Bigl( ( [G, i_G] - [G,0] ) - \sum_p \omega_p \Bigr)
\\={}& \Bigl( \sum_p a_p \Bigl( \frac{\lvert G\rvert}{\lvert S_p\rvert} ( [G, i_G] - [G,0] ) - ( [S_p, i_{S_p}] - [S_p,0] ) \Bigr) \Bigr) \times_G \Bigl( ( [G, i_G] - [G,0] ) - \sum_p \omega_p \Bigr)
\\  ={}& \Bigl(\sum_p a_p \frac{\lvert G\rvert}{\lvert S_p\rvert} \Bigr) \cdot \Bigl( ( [G, i_G] - [G,0] ) - \sum_p \omega_p \Bigr)
\\ &- \Bigl( \sum_p a_p ( [S_p, i_{S_p}] - [S_p,0] ) \Bigr) \times_G \Bigl( ( [G, i_G] - [G,0] ) - \sum_p \omega_p \Bigr)
\end{align*}
By Equation \ref{equation}, this is equal to
\begin{align*}
& \Bigl( ( [G, i_G] - [G,0] ) - \sum_p \omega_p \Bigr)
\\ & -  \Bigl( \sum_p a_p ( [S_p, i_{S_p}] - [S_p,0] ) \Bigr) \times_G \Bigl( ( [G, i_G] - [G,0] ) - \sum_p \omega_p \Bigr)
\\ ={}& \Bigl( ( [G, i_G] - [G,0] ) - \sum_p \omega_p \Bigr)
\\ & -  \Bigl( \sum_p a_p \Bigl(( [S_p, i_{S_p}] - [S_p,0] ) \times_G ( [G, i_G] - [G,0] ) - ( [S_p, i_{S_p}] - [S_p,0] ) \times_G \sum_q w_q \Bigr) \Bigr)
\\ ={}& \Bigl( ( [G, i_G] - [G,0] ) - \sum_p \omega_p \Bigr)  -  \Bigl( \sum_p a_p \Bigl(( [S_p, i_{S_p}] - [S_p,0] )  - ( [S_p, i_{S_p}] - [S_p,0] ) \times_G \omega_p \Bigr) \Bigr)
\\ ={}& \Bigl( ( [G, i_G] - [G,0] ) - \sum_p \omega_p \Bigr)  -  \Bigl( \sum_p a_p \cdot 0 \Bigr)
\\ ={}& ( [G, i_G] - [G,0] ) - \sum_p \omega_p.
\end{align*}
Since $Z$ is in $I_G\cdot ([G,i_G]-[G,0])$, \eqref{eqProductInAugmentationIdeal} shows that $( [G, i_G] - [G,0] ) - \sum_p \omega_p$ is in $I_G^k \KG(G,G)$ for all $k$ and therefore equal to $0$ in the $I_G$-adic completion. Thus $( [G, i_G] - [G,0] ) = \sum_p \omega_p$ as we claimed.
\end{example}

\appendix

\section{Categories related to fusion systems}\label{sectionCategories}
We introduce several categories closely connected to the category of fusion systems and study some of the functors between them. We apply the formula for $p$-completion of the previous section to produce a commutative diagram involving these categories.


Let $\G$ be the category of finite groups and group homomorphisms. Fix a prime $p$. Let $\Gs$ be the category with objects pairs $(G,S)$, where $G$ is a finite group and $S$ is a Sylow $p$-subgroup of $G$. A morphism between two objects $(G,S)$ and $(H,T)$ is a homomorphism $f \colon G \rightarrow H$ such that $f(S) \subset T$. Let $\F$ be the category of saturated fusion systems. The objects are saturated fusion systems $(\cF,S)$ and a morphism from $(\cF,S)$ to $(\cG,T)$ is a fusion preserving group homomorphism $S \rightarrow T$.

Recall that $\AG$ is the Burnside category of finite groups. Objects are finite groups and the morphism set between two groups $G$ and $H$, $\AG(G,H)$, is the Grothendieck group of finite $(G,H)$-bisets with a free $H$-action. Also recall that $\AF$ is the Burnside category of fusion systems. Let $\AGs$ be the category with objects pairs $(G,S)$ where $G$ is a finite group and $S$ is a Sylow $p$-subgroup of $G$ and with morphisms between two objects $(G,S)$ and $(H,T)$ given by
\[
\AGs((G,S),(H,T)) = \AG(G,H).
\]

We will also make use of several categories coming from homotopy theory. Let $\Ho(\Top_{\G})$ be the full subcategory of the homotopy category of spaces on the classifying spaces of finite groups. Let $\Ho(\Sp)$ be the homotopy category of spectra and let $\Ho(\Sp_p)$ be the homotopy category of $p$-complete spectra.

The categories above are related by several canonical functors.
The first of these $\A \colon \G \rightarrow \AG$ is the functor from the category of groups to the Burnside category. It takes a group homomorphism $\phi \colon G \rightarrow H$ to the $(G,H)$-biset $[G,\phi]_{G}^{H}$, which is just ${}_G^\phi H_H$ with $G$ acting through $\phi$ on the left. The functor $\A$ is not faithful, conjugate maps are identified. It does factor through the full functor $B(-)$ to $\Ho(\Top_{\G})$ and $\Ho(\Top_{\G})$ does map faithfully into $\AG$.

Let $U \colon \Gs \rightarrow \G$ be the forgetful functor, sending $(G,S)$ to $G$. This functor is faithful. While the functor is not full, given a map $\phi \colon G \rightarrow H$ and a Sylow subgroup $S \subseteq G$, $\phi(S)$ is contained in some Sylow subgroup of $H$ and this provides a lift of $\phi$ to $\Gs$.

Let $\As \colon \Gs \rightarrow \AGs$ be defined just as the functor $\A$. It takes a morphism $\phi \colon (G,S) \rightarrow (H,T)$ to the biset $[G, \phi]_{G}^{H}$. Note that any homomorphism $G \rightarrow H$ is $H$-conjugate to a homomorphism sending $S$ into $T$. Thus the image of $\Gs((G,S), (H,T))$ in
\[
\AGs((G,S),(H,T)) = \AG(G,H)
\]
is equal to the image of $\G(G,H)$ under the functor $\A$.

Let $\AU \colon \AGs \rightarrow \AG$ be the forgetful functor. Note that this functor is an equivalence, it is fully faithful and surjective on objects.

Let $F \colon \Gs \rightarrow \F$ be the functor sending a pair $(G,S)$ to the induced fusion system $\cF_G$ on $S$. By construction, the morphisms in $\Gs$ restrict to fusion preserving maps between the chosen Sylow $p$-subgroups.

Let $\Af \colon \F \rightarrow \AF$ be the functor that is the identity on objects and takes a map of fusion systems $(\cF,S) \rightarrow (\cG,T)$ induced by a fusion preserving map $\phi \colon S \rightarrow T$ to $[S, \phi]_{\cF}^{\cG}$. In Proposition \ref{ApFunctor} below, we prove that this is a functor.

\begin{lemma}\label{fusionPreservingMaps}
Let $\phi\colon S\to T$ be a fusion preserving map between saturated fusion systems $(\cF,S)$ and $(\cG, T)$. Then
\[\omega_\cF\times_S [S,\phi] \times_T \omega_\cG = [S,\phi]\times_T \omega_\cG.\]
\end{lemma}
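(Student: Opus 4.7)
The plan is to show that $[S,\phi]\times_T \omega_\cG$ is already left $\cF$-stable, so that prepending $\omega_\cF$ changes nothing. By the algebraic characterization of stability given earlier in the paper, this amounts to verifying that for every $\cF$-isomorphism $\psi\colon P\xrightarrow{\cong} Q$ between subgroups of $S$, the restriction of $[S,\phi]\times_T\omega_\cG$ along the inclusion $P\hookrightarrow S$ and along $P\xrightarrow{\psi} Q\hookrightarrow S$ yield isomorphic $(P,T)$-bisets.

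Restricting the transitive biset $[S,\phi]_S^T$ along a homomorphism $\alpha\colon P\to S$ gives $[P,\phi\circ\alpha]_P^T$, so the two restrictions in question are $[P,\phi|_P]_P^T\times_T\omega_\cG$ and $[P,\phi\circ\psi]_P^T\times_T\omega_\cG$. Since $\phi$ is fusion preserving there is a unique $\rho\in\cG(\phi(P),\phi(Q))$ with $\phi|_Q\circ\psi=\rho\circ\phi|_P$, and $\rho$ is itself a $\cG$-isomorphism because $\psi$ is. Applying the standard biset composition rule $[P,\gamma\circ\beta]_P^T = [P,\beta]_P^{M}\times_M [M,\gamma]_M^T$ to the factorizations of $\phi|_P$ and $\rho\circ\phi|_P$ through $M=\phi(P)$, both restrictions take the shape
\[
[P,\phi|_P]_P^{\phi(P)}\times_{\phi(P)} Z,
\]
where $Z$ is $[\phi(P),i]_{\phi(P)}^T\times_T\omega_\cG$ in one case and $[\phi(P),\rho]_{\phi(P)}^T\times_T\omega_\cG$ in the other (with $i$ the inclusion $\phi(Q)\hookrightarrow T$).

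Thus the problem reduces to the equality
\[
[\phi(P),i]_{\phi(P)}^T\times_T\omega_\cG=[\phi(P),\rho]_{\phi(P)}^T\times_T\omega_\cG,
\]
which is precisely the statement that $\omega_\cG$ is left $\cG$-stable, applied to the $\cG$-isomorphism $\rho\colon\phi(P)\xrightarrow{\cong}\phi(Q)$. The main potential obstacle is the bookkeeping for restriction versus composition of bisets and the conjugacy ambiguity in the symbols $[K,\psi]$, but once the identification of restriction along $\alpha$ with precomposition by $[P,\alpha]$ is in place, the proof collapses to a direct application of the defining stability property of the characteristic idempotent $\omega_\cG$.
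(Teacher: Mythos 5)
Your proof is correct and follows essentially the same route as the paper: both show that $[S,\phi]\times_T\omega_\cG$ is already left $\cF$-stable by restricting along a map in $\cF$, invoking the fusion-preserving property to produce the map $\rho$ in $\cG$, and then using the defining stability of $\omega_\cG$ to absorb $\rho$. The only difference is cosmetic: where the paper compresses the last step into the phrase ``$\omega_\cG$ absorbs maps in $\cG$,'' you unpack it by factoring $[P,\phi|_P]$ and $[P,\rho\circ\phi|_P]$ through $\phi(P)$ so that the claim visibly reduces to left $\cG$-stability of $\omega_\cG$; this is a helpful elaboration but not a different argument.
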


\begin{proof}
It is sufficient to show that $X:= [S,\phi]\times_T \omega_\cG$ is left $\cF$-stable. To see that $X$ is $\cF$-stable we consider an arbitrary subgroup $P\leq S$ and map $\psi\in \cF(P,S)$ and prove that the restriction of $X$ along $\psi$, ${}_P^\psi X_T$, is isomorphic to ${}_P X_T$ as virtual $(P,T)$-bisets.

The restriction of $[S,\phi]_S^T$ along $\psi\colon P\to S$ is just $[P,\phi\circ \psi]_P^T$. We therefore have
\[{}_P^\psi X_T = [P,\phi\circ \psi]_P^T \times_T \omega_\cG.\]
Since $\psi$ is a map in $\cF$, and since $\phi$ is assumed to be fusion preserving, this means that there is some map $\rho\colon \phi(P) \to T$ in $\cG$ such that $\phi|_{\psi(P)}\circ \psi = \rho\circ \phi|_P$. Finally, $\omega_\cG$ absorbs maps in $\cG$, and thus
\[{}_P^\psi X_T = [P,\phi\circ \psi]_P^T \times_T \omega_\cG = [P,\rho\circ \phi]_P^T \times_T \omega_\cG =  [P,\phi]_P^T \times_T \omega_\cG = {}_P X_T.\qedhere\]
\end{proof}

\begin{proposition}\label{ApFunctor}
The operation $\Af$ described above is a functor.
\end{proposition}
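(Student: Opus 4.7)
The plan is to verify the two functor axioms: preservation of identities and of composition. Since $\Af$ is the identity on objects, nothing needs to be checked there. For a fixed object $(\cF, S)$, the identity morphism maps to
\[
\Af(\id_S) = [S, \id_S]_{\cF}^{\cF} = \omega_{\cF} \times_S [S, \id_S]_{S}^{S} \times_S \omega_{\cF} = \omega_{\cF} \times_S \omega_{\cF} = \omega_{\cF},
\]
using idempotency of the characteristic idempotent at the last step. Since $\omega_\cF$ is by construction the identity morphism on $(\cF, S)$ in $\AF$, the identity axiom is confirmed.

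The substantive step is composition. Given fusion preserving maps $\phi \colon (\cF, S) \to (\cG, T)$ and $\psi \colon (\cG, T) \to (\cH, U)$, I need to show that
\[
\Af(\phi) \times_T \Af(\psi) = \Af(\psi \circ \phi).
\]
Expanding the left-hand side gives
\[
(\omega_\cF \times_S [S, \phi]_S^T \times_T \omega_\cG) \times_T (\omega_\cG \times_T [T, \psi]_T^U \times_U \omega_\cH),
\]
and idempotency of $\omega_\cG$ collapses the two adjacent middle factors to a single $\omega_\cG$.

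The key move---and the one place where any real hypothesis is invoked---is to absorb this remaining middle $\omega_\cG$. This is precisely Lemma \ref{fusionPreservingMaps} applied to $\psi$ rather than $\phi$, yielding $\omega_\cG \times_T [T, \psi]_T^U \times_U \omega_\cH = [T, \psi]_T^U \times_U \omega_\cH$, which uses in an essential way the assumption that $\psi$ is fusion preserving. Combined with the elementary biset identity $[S, \phi]_S^T \times_T [T, \psi]_T^U = [S, \psi \circ \phi]_S^U$, the expression collapses to $\omega_\cF \times_S [S, \psi \circ \phi]_S^U \times_U \omega_\cH = [S, \psi \circ \phi]_{\cF}^{\cH} = \Af(\psi \circ \phi)$. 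I expect no further obstacles beyond careful bookkeeping of the tensor products over the correct groups.
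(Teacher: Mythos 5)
Your proof is correct and takes essentially the same route as the paper: both use idempotency to collapse the two middle copies of the characteristic idempotent and then invoke Lemma \ref{fusionPreservingMaps} applied to the second map in the composite to absorb the remaining $\omega_\cG$. The only addition is your explicit check of the identity axiom, which the paper leaves implicit.
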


\begin{proof}
Suppose we have two fusion preserving maps $\psi\colon R\to S$, $\phi\colon S\to T$ between saturated fusion systems $(\cE,R)$, $(\cF,S)$ and $(\cG,T)$. Applying Lemma \ref{fusionPreservingMaps} to $\phi$, we easily confirm that $\Af$ preserves composition:
\begin{align*}
\Af(\phi) \circ \Af(\psi) &= \omega_\cE \times_R [R,\psi] \times_S \omega_\cF \times_S [S,\phi] \times_T \omega_\cG
\\ &= \omega_\cE \times_R [R,\psi] \times_S [S,\phi] \times_T \omega_\cG
\\ &= \omega_\cE \times_R [R,\phi\circ \psi] \times_T \omega_\cG
\\ &= \Af(\phi\circ \psi).\qedhere
\end{align*}
\end{proof}

Let $\widehat{(-)} \colon \AGs \rightarrow \AF$ be the functor taking $(G, S)$ to $\cF_{G}$ and taking a virtual $(G,H)$-biset $X$ to
\[
{}_{\cF_G}\widehat{X}_{\cF_H}
\]
as described in Theorem \ref{mainthm}.

\begin{proposition} \label{comparegroupsandburnside}
There is a commutative diagram of categories
\[
\xymatrix{\G \ar[r]^{\A}  & \AG \\ \Gs \ar[u]^{U} \ar[r]^{\As} \ar[d]_{F} & \AGs \ar[u]_{\A U} \ar[d]^{\widehat{(-)}} \\ \F \ar[r]_{\Af} & \AF.}
\]
\end{proposition}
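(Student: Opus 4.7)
The plan is to verify commutativity of the top and bottom squares separately; each reduces to tracking biset identities through the definitions of the functors.

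The top square is formal. On objects, $\A U(G,S) = G = \AU \As(G,S)$. On morphisms, a map $\phi\colon (G,S)\to (H,T)$ in $\Gs$ is just a group homomorphism $G\to H$ satisfying $\phi(S)\subseteq T$, and both composites send $\phi$ to the biset $[G,\phi]_G^H\in \AG(G,H)$: the functors $U$ and $\AU$ leave the underlying morphism untouched, while $\As$ and $\A$ are defined identically on homomorphisms.

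For the bottom square, agreement on objects is immediate since both routes produce $\cF_G$. Fix a morphism $\phi\colon (G,S)\to (H,T)$ in $\Gs$; note that $\phi|_S$ is fusion preserving because $G$-conjugation by $g$ is intertwined with $H$-conjugation by $\phi(g)$. Going down-then-right through $F$ and $\Af$ yields $[S,\phi|_S]_{\cF_G}^{\cF_H}$, which by Lemma \ref{fusionPreservingMaps} simplifies to $[S,\phi|_S]_S^T \times_T \omega_{\cF_H}$. Going right-then-down through $\As$ and $\A F$ yields $\widehat{[G,\phi]_G^H}$, which by Proposition \ref{mainprop} is ${}_{\cF_G}[G,\phi]_{\cF_H} \times_T ({}_{\cF_H}H_{\cF_H})^{-1}$.

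It therefore suffices to establish the identity
\[
{}_{\cF_G}[G,\phi]_{\cF_H} = \bigl([S,\phi|_S]_S^T \times_T \omega_{\cF_H}\bigr) \times_T {}_{\cF_H}H_{\cF_H}
\]
in $\AF(\cF_G,\cF_H)$, at which point cancellation of ${}_{\cF_H}H_{\cF_H}$ against its inverse produces the desired equality. To verify this identity, observe that the restriction of $[G,\phi]_G^H = {}_G^{\phi}H_H$ along $S\subseteq G$ and $T\subseteq H$ is ${}_S^{\phi}H_T$, which factors as $[S,\phi|_S]_S^T \times_T {}_T H_T$. The biset ${}_T H_T$ is the restriction to $(T,T)$ of the $\cF_H$-bistable $(H,H)$-biset ${}_H H_H$ and is therefore itself $\cF_H$-bistable, so ${}_{\cF_H}H_{\cF_H}$ is simply ${}_T H_T$ viewed inside $\AF(\cF_H,\cF_H)$, and $\omega_{\cF_H}$ may be absorbed against it from either side. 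The outer $\omega_{\cF_G}$ is then absorbed by invoking Lemma \ref{fusionPreservingMaps} on $\phi|_S$. The main obstacle is the careful bookkeeping of which elements are already left- or right-stable at each step; the essential conceptual content has already been supplied by Proposition \ref{mainprop} and Lemma \ref{fusionPreservingMaps}.
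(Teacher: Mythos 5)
Your proof is correct and follows essentially the same route as the paper: compute the restriction of $[G,\phi]_G^H$ to the Sylow subgroups, factor it as $[S,\phi|_S]_S^T \times_T {}_T H_T$, cancel against $({}_{\cF_H}H_{\cF_H})^{-1}$, and invoke Lemma \ref{fusionPreservingMaps} to absorb the remaining left idempotent. The only cosmetic difference is that you phrase the cancellation as verifying a preliminary identity and then multiplying both sides by the inverse, whereas the paper computes the composite directly; the ingredients and the key factorization step are identical.
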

\begin{proof}
The top square clearly commutes, so we need to prove that the bottom square commutes as well.

Let $\phi\colon G\to H$ be a morphism in $\Gs$ from $(G,S)$ to $(H,T)$, meaning that $\phi(S)\leq T$. The functor $\As$ takes $\phi$ to the biset $[G,\phi]_G^H$. To understand what happens when we apply $\widehat{(-)}$ to $[G,\phi]_G^H$, we first need to understand the restriction ${}_S ([G,\phi]_G^H)_T$ of the biset to the Sylow $p$-subgroups.

We can describe the restriction ${}_S ([G,\phi]_G^H)_T$ as composing with the inclusion biset ${}_S G_G$ on the left and the transfer biset ${}_H H_T$ on the right:
\[{}_S ([G,\phi]_G^H)_T = {}_S G \times_G [G,\phi]_G^H \times_H H_T.\]
Now ${}_S G \times_G [G,\phi]_G^H$ simply gives us the restriction of $\phi$ to the subgroup $S$, $[S,\phi|_S]_S^H$. By assumption $\phi|_S$ lands in $T\leq H$, and therefore
\[{}_S ([G,\phi]_G^H)_T = [S,\phi|_S]_S^H \times_H H_T = [S,\phi|_S]_S^T \times_T H\times_H H_T = [S,\phi|_S]_S^T \times_T H_T.\]
The biset ${}_T H_T$ is $\cF_H$-stable and invertible inside $\AF(\cF_{H},\cF_{H})$, and the functor $\widehat{(-)}$ applied to $[G,\phi]_G^H$ is by Theorem \ref{mainthm} equal to
\begin{align*}
\widehat{ \As ( \phi)} &= {}_S ([G,\phi]_G^H) \times_T ({}_{\cF_H} H_{\cF_H})^{-1}
\\ &= [S,\phi|_S]_S^T \times_T H \times_T ({}_{\cF_H} H_{\cF_H})^{-1}
\\ &= [S,\phi|_S]_S^T \times_T \omega_{\cF_H}
\\ &= [S,\phi|_S]_{\cF_G}^{\cF_H} = \Af(F(\phi))
\end{align*}
The penultimate equality is due to Lemma \ref{fusionPreservingMaps} since the restriction $\phi|_S$ is a fusion preserving map from $\cF_G$ to $\cF_H$.
\end{proof}

Let $\al \colon \AGs \rightarrow \Ho(\Sp)$ be the functor sending $G$ to $\Sinfp BG$ and sending $[K, \phi]_{G}^{H}$ to the composite
\[
\Sinfp BG \lra{\Tr_{G}^{K}} \Sinfp BK \lra{\Sinfp B\phi} \Sinfp BH,
\]
where $\Tr$ is the transfer. This functor is well-understood by the solution to the Segal conjecture. It is neither full nor faithful.

Let $\beta \colon \AF \rightarrow \Ho(\Sp_p)$ be the analogous functor for fusion systems. It sends the object $\cF$ to $\Sinfp B \cF$ and applies Proposition \ref{aniso} to maps. The functor $\beta$ is fully faithful.

Let $(-)^{\wedge}_{p}$ be the $p$-completion functor $\Ho(\Sp) \rightarrow \Ho(\Sp_p)$.

\begin{proposition}
There is a commutative diagram
\[
\xymatrix{\AG \ar[r]^-{\al}& \Ho(\Sp) \ar[dd]^-{(-)^{\wedge}_{p}} \\ \AGs \ar[u]^-{\AU}_-{\simeq}  \ar[d]_-{\widehat{(-)}} &  \\ \AF \ar[r]^-{\beta} & \Ho(\Sp_p)}
\]
up to canonical natural equivalence and the formula for $\widehat{(-)}$ is given by Theorem \ref{mainthm}.
\end{proposition}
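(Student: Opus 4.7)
The plan is to reduce the claim to Proposition \ref{mainprop}, which already carries the main geometric content. Both composites $(-)^{\wedge}_{p}\circ \al\circ \AU$ and $\beta \circ \A F$ are functors from $\AGs$ to $\Ho(\Sp_p)$ that are (on objects) built from $\Sinfp BG$ versus $\Sinfp B\cF_G$, and we already possess a canonical natural equivalence between these two spectra from Proposition \ref{aG}. So the task splits cleanly into (i) specifying the natural equivalence on objects, and (ii) checking naturality on morphisms.

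For step (i), for each object $(G,S)\in \AGs$ we take as comparison map the equivalence
\[
a_G\colon \Sinfpp B\cF_G \xrightarrow{\simeq} \Sinfpp BG
\]
from Proposition \ref{aG}, with inverse $b_G$ provided by the explicit formula involving $({}_{\cF_G}G_{\cF_G})^{-1}$. This gives the object-level comparison $\beta(\A F(G,S)) \simeq ((\al\circ \AU)(G,S))^{\wedge}_p$.

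For step (ii), given a morphism $X\in \AGs((G,S),(H,T))=\AG(G,H)$, we must verify that the square
\[
\xymatrix{\Sinfpp B\cF_G \ar[r]^-{a_G} \ar[d]_{\beta(\A F(X))} & \Sinfpp BG \ar[d]^{\al(X)^{\wedge}_p} \\ \Sinfpp B\cF_H \ar[r]^-{a_H} & \Sinfpp BH}
\]
commutes in $\Ho(\Sp_p)$. Equivalently, postcomposing with $b_H$ on both sides, we must show that $b_H\circ \al(X)^{\wedge}_p \circ a_G = \beta(\A F(X))$. Now $\beta(\A F(X))$ is by definition $\beta$ applied to $\widehat{X}={}_{\cF_G}X\times_T H^{-1}_{\cF_H}$, which under the isomorphism of Proposition \ref{aniso} is exactly the spectrum-level composite $b_H\circ (\al X)^{\wedge}_p \circ a_G$. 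So the required identity is literally the definition of $\widehat{(-)}$ combined with Proposition \ref{mainprop}, which established that the assignment $X\mapsto {}_{\cF_G}X\times_T H^{-1}_{\cF_H}$ fits into the commutative square
\[
\xymatrix{\Sinfp BG \ar[rrr]^X \ar[d]_{c_G} & & & \Sinfp BH \ar[d]^{c_H} \\ \Sinfpp B\cF_G \ar[rrr]^-{{}_{\cF_G}X \times_T H^{-1}_{\cF_H}} & & & \Sinfpp B\cF_H.}
\]

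There is essentially no obstacle here beyond bookkeeping: the real work was done in Proposition \ref{mainprop}. The only point that deserves explicit mention is that the natural equivalence $a_G$ is genuinely natural for all of $\AGs$, not just for group homomorphisms; but this is exactly what the argument above verifies, since any virtual biset $X$ is treated uniformly via its image under $\al$ followed by $p$-completion. Consequently the diagram commutes up to the canonical natural equivalence $\{a_G\}_{(G,S)\in \AGs}$, which completes the proof.
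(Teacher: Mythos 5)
Your proof is correct and follows essentially the same approach as the paper: the paper's own proof is the one-line statement that the diagram is a consequence of Proposition~\ref{mainprop}, and you are simply unpacking what that means — the comparison equivalence on objects is $a_G$ from Proposition~\ref{aG}, and the naturality square for a morphism $X$ is, after postcomposing with $b_H$, exactly the identity $\beta(\A F(X)) = b_H \circ \al(X)^{\wedge}_p \circ a_G$, which is the content of the definition of $\widehat{(-)}$ together with Proposition~\ref{mainprop}.
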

\begin{proof}
This is a consequence of Theorem \ref{mainthm}.
\end{proof}

Let $\AG^{\mathrm{free}}(G,H)$ be the submodule of the Burnside module $\AG(G,H)$ generated by bisets that have a free action by both $G$ and $H$. This submodule has a basis consisting of isomorphism classes of sets of the form $[K,\phi]_{G}^{H}$, where $\phi$ is an injection. This includes the biset  ${}_G H_H = [G,i]_{G}^{H} = \A_{\mathrm{syl}}(i)$, where $G$ acts on $H$ through an injection $i \colon G \to H$.

We have a natural isomorphism
\[
(-)^{\mathrm{op}} \colon \AG^{\mathrm{free}}(G,H) \xrightarrow{\cong} \AG^{\mathrm{free}}(H,G) \subset \AG(H,G)
\]
sending
\[
{}_G X_H \mapsto ({}_G X_H)^{\mathrm{op}} = {}_H (X^{\mathrm{op}})_G,
\]
where $X^{\mathrm{op}}$ has the same underlying set as $X$, and an element $g\in G$ acts on an element $x\in X$ from the right by $g^{-1}\cdot x$ using the left $G$-action on $X$. Similarly $H$ has a left-action on $X^{\mathrm{op}}$.

Thus elements of $\AG^{\mathrm{free}}(G,H)$ give rise to maps in $\mathbb{AG}$ not only from $G$ to $H$, but applying $(-)^{\mathrm{op}}$ we also get a map in $\AG$ from $H$ to $G$. The image under $(-)^{\mathrm{op}}$ of $[G,i]_{G}^{H} = {}_G H_H$, where $i$ is an injection, is referred to as the transfer map along $i$ and is given by the biset $({}_G H_H)^{\mathrm{op}} = {}_H H_G = [i(G), i^{-1}]_H^G$.

The same story makes sense for fusion preserving injections between two fusion systems. Let $\cF_1$ and $\cF_2$ be saturated fusion systems on $p$-groups $S_1$ and $S_2$, and denote by $\AF^{\mathrm{free}}(\cF_1,\cF_2)$ the collection of $(\cF_1,\cF_2)$-stable element in $\AG^{\mathrm{free}}(S_1,S_2)^{\wedge}_p$. The isomorphism
\[
(-)^{\mathrm{op}} \colon \Z_p \otimes \AG^{\mathrm{free}}(S_1,S_2) \xrightarrow{\cong} \Z_p \otimes \AG^{\mathrm{free}}(S_2,S_1)
\]
induces an isomorphism
\[
(-)^{\mathrm{op}} \colon \AF^{\mathrm{free}}(\cF_1,\cF_2) \xrightarrow{\cong} \AF^{\mathrm{free}}(\cF_2,\cF_1).
\]
A transfer map from $\cF_2$ to $\cF_1$ is the image of an element of the form $[S_1,i]_{\cF_1}^{\cF_2} = \Af(i)$, where $i$ is an injection of fusion systems, under the map $(-)^{\mathrm{op}}$.

Since the functor $F$ takes injection to injections, Proposition \ref{comparegroupsandburnside} implies that
\begin{equation} \label{lastequation}
\widehat{[G,i]_{G}^{H}} = [S,F(i)]_{\cF_G}^{\cF_H},
\end{equation}
where $S \subset G$ is a Sylow $p$-subgroup. Thus the functor $\widehat{(-)}$ ``takes injections to injections.''

It is tempting to assume that the functor $\widehat{(-)}$ will also preserve transfer maps. However, in general,
\[
(\widehat{{}_G H_H})^{\mathrm{op}} \neq \widehat{{}_H H_G}.
\]
To see this, let $G = e$ and let $H$ to be a non-trivial group of order prime to $p$. Consider the element $[e,i]_{e}^{H} \in \AG(e,H)$, where $i$ is the inclusion of the identity element. Composing this biset with its opposite, $[e,i]_H^e$, we get the element ${}_{e}H_{e} = |H| \in \AG(e,e)\cong \Z$. Since both $e$ and $H$ have trivial Sylow $p$-subgroups, \eqref{lastequation} implies that
\[
\widehat{_{e}H_{H}} = \Id_{\cF_e}
\]
and thus
\[
(\widehat{_{e}H_{H}})^{\mathrm{op}} = ( \Id_{\cF_e})^{\mathrm{op}} =  \Id_{\cF_e}.
\]
However,
\[
\widehat{_{H}H_{e}} \circ \widehat{_{e}H_{H}}  = \widehat{_{e}H_{e}} = |H| \in \AF(e,e) \cong \Z_p.
\]
Since $\widehat{_{e}H_{H}} = \Id_{\cF_e}$, we conclude that the operations $(-)^{\mathrm{op}}$ and $\widehat{(-)}$ do not commute:
\[
\widehat{(_{e}H_{H})^{\mathrm{op}}} = \widehat{_{H}H_{e}}  \neq \Id_{\cF_e} = \bigl(\widehat{_e H_H}\bigr)^{\mathrm{op}}.
\]
It may come as a surprise to the reader to find out that confusion regarding this issue, the relationship between $p$-completion and transfers, was the original motivation for this paper.

\begin{bibdiv}
\begin{biblist}
\bibselect{bibliography}
\end{biblist}
\end{bibdiv}

\end{document}